\PassOptionsToPackage{pagewise}{lineno}
\documentclass[review]{elsarticle}

\usepackage{lineno,hyperref}

\journal{Transportation Research Part C}
\usepackage{multirow}
\usepackage{etex}
\usepackage{amsfonts}
\usepackage{mathrsfs}
\usepackage{comment}
\usepackage{tablefootnote}
\usepackage{float}
\usepackage{algorithm}
\usepackage{algpseudocode}
\usepackage{todonotes}
\usepackage{amsmath}
\usepackage{longtable}
\usepackage{threeparttable}
\usepackage{caption}
\usepackage{pdflscape}
\usepackage{graphics}
\usepackage{subfigure}
\usepackage{subcaption}
\usepackage{ulem}
\usepackage{soul}
\usepackage{adjustbox}
\usepackage[table]{xcolor}
\definecolor{cT}{HTML}{0E8C8C} \definecolor{cR}{HTML}{C0392B}
\definecolor{cOpt}{HTML}{FFFFFF}

\definecolor{shade}{HTML}{808080}

\usepackage{xcolor}

\usepackage{xspace}
\newcommand\BRFrag{battery-restricted fragment\xspace}
\newcommand\BRFrags{battery-restricted fragments\xspace}
\newcommand\Must{must-dispatch\xspace}
\newcommand\Postpone{postponable\xspace}
\newcommand\state{\boldsymbol{x}_e\xspace}
\newcommand\frag{Frag\xspace}
\newcommand\feature{\phi\xspace}

\newcommand\lb{anticipative reference\xspace}

\usepackage{microtype}
\usepackage{amsthm}
\usepackage{amsmath}

\usepackage{amssymb}
\usepackage{stmaryrd}
\usepackage{color,xcolor}
\usepackage{soul}
\soulregister{\cite}7
\soulregister{\citep}7
\soulregister{\ref}7

\usepackage{graphicx}
\usepackage{geometry}
\usepackage{tikz}
\usetikzlibrary{shapes.geometric, arrows}
\usepackage{diagbox}

\usepackage{footnote}
\makesavenoteenv{table}
\newcommand{\beq}{\begin{equation}}
\newcommand{\eeq}{\end{equation}}

\theoremstyle{definition}
\newtheorem{exmp}{Example}
\theoremstyle{remark}

\newtheorem{defn}{Definition}
\newtheorem{proposition}{Proposition}

\makeatletter 
\def\@makecaption#1#2{%
  \vskip\abovecaptionskip
  \sbox\@tempboxa{#1 #2}%
    {\bfseries #1} #2\par
  \vskip\belowcaptionskip}
\makeatother
\usepackage{booktabs}
\usepackage[justification=centering]{caption}

\usepackage{hyperref}
\hypersetup{
    colorlinks=true,
    linkcolor=blue,
    filecolor=gray,      
    urlcolor=blue,
    citecolor=blue,
}
\biboptions{authoryear}

\allowdisplaybreaks

\usepackage{comment}

\begin{document}




\begin{frontmatter}
\title{Combinatorial Optimization Augmented Machine Learning for Dynamic Electric Autonomous Dial-a-Ride Problem}
\author[address1]{Yue Su}
\ead{yue.su@inria.fr}
\author[address2]{Léa Ricard}
\ead{lea.ricard@epfl.ch}

\address[address1]{Centre INRIA de l’Université de Lille, 59000 Lille, France.}
\address[address2]{Transport and Mobility Laboratory, School of Architecture, Civil and Environmental Engineering, Ecole Polytechnique F\'ed\'erale de Lausanne, Switzerland}

\cortext[correspondingauthor]{Corresponding author: lea.ricard@epfl.ch}
\begin{abstract}
This study introduces a decision-epoch-based dynamic electric autonomous dial-a-ride problem (Dyn-EADARP), in which incoming requests are collected and processed at periodic decision epochs. A key decision is not only how to serve requests, but also when to dispatch them. At each decision epoch, the service provider decides which requests to serve and which to postpone, while jointly determining vehicle routes, schedules, and charging decisions. Unexecuted parts of existing plans can be revised as new information becomes available. To solve this problem, we develop an ML--CO policy following the combinatorial optimization augmented machine learning (COAML) framework, which combines a statistical model with a combinatorial optimization layer for decision making. The statistical model predicts prizes for available requests, and a prize-collecting E-ADARP uses these prizes to jointly determine request selection, routing, scheduling, and charging. The statistical model is trained directly to improve the decisions produced by the optimization layer. Computational experiments on 320 test instances demonstrate the efficiency of ML--CO, which solves instances with nearly 500 requests in about 6 seconds on average. It achieves 8.7\%--14.3\% lower objective values than benchmark policies and an average gap of 4.8\% to the anticipative reference. The results provide several managerial insights. First, serving requests immediately is not always best, as selectively postponing some requests can create better ride-sharing opportunities. Second, revising existing plans preserves operational flexibility and substantially improves solution quality. Finally, more frequent decision making does not necessarily improve performance, highlighting the importance of choosing an appropriate decision frequency.

\end{abstract}

\begin{keyword}
dynamic dial-a-ride problem\sep electric autonomous vehicles \sep decision-focused learning
\end{keyword}

\end{frontmatter}


\section{Introduction}
Rapid urbanization has increased urban travel demand and intensified traffic congestion, energy consumption, and air pollution \citep{levy2010evaluation}. More than half of the world's population currently lives in urban areas, and this share is expected to reach 68\% by 2050 \citep{un2019worldurbanization}. These trends create a growing need for efficient and environmentally sustainable urban mobility services.

Demand-responsive mobility services provide a flexible way to meet this need. In particular, ride-sharing can serve multiple requests in the same vehicle, improving vehicle utilization while maintaining an acceptable level of service \citep{jin2018ridesourcing}. The underlying optimization problem is usually formulated as a dial-a-ride problem (DARP), which determines vehicle routes and schedules to provide ride-sharing services for requests with different origins and destinations \citep{cordeau2007dial}.
At the same time, electric vehicles and autonomous driving offer new opportunities to reduce emissions and support more flexible operations. Combining these technologies with ride-sharing gives rise to the electric autonomous dial-a-ride problem (E-ADARP), which is introduced in \cite{bongiovanni2019electric}. In addition to the classical DARP constraints, the E-ADARP considers battery consumption and charging operations. Decisions on charging-station visits and recharging time must therefore be made jointly with routing and scheduling decisions, making the problem more complex than the classical DARP \citep{bongiovanni2024ride}.
Most works of the E-ADARP solve the static version in which customer requests are known in advance \citep{su2023deterministic,su2024branch,su2025bi,limmer2023bilevel,stallhofer2025event}. However, in practice, customer requests usually arrive in real time, requiring vehicle routes and service decisions to be updated dynamically. 
More recently, several works \citep{bongiovanni2022machine,huang2025genetic,Tomandl2026} have extended the static E-ADARP to an ``event-driven'' dynamic setting, where a reoptimization process is invoked whenever a new request arrives. 
A common limitation of these approaches is that the reoptimization frequency is directly tied to the request-arrival frequency: each request arrival requires solving a new static E-ADARP. In high-demand settings, repeatedly solving these problems can become computationally demanding and is often operationally unnecessary, limiting the practical applicability of these approaches.
An alternative is to collect incoming requests and process them at periodic decision epochs, thereby separating request arrivals from dispatching decisions \citep{alonso2017demand}. At each decision epoch, available requests may either be dispatched immediately or postponed to a later epoch, making dispatch timing an explicit decision. Such a setting has been studied in dynamic ride-pooling \citep{Fielbaum2022}, but not yet in the E-ADARP.


Motivated by these observations, we study a new variant of the E-ADARP, where request arrivals between two consecutive epochs are collected and become available for decision making at the next epoch. At each epoch, the service provider decides which available requests to dispatch and which to postpone, while jointly determining vehicle routes and schedules. Previously planned but unexecuted portions of routes can be revised as new information becomes available. To solve this problem efficiently, we develop a highly efficient policy based on a combinatorial optimization augmented machine learning (COAML) framework. The COAML architecture consists of an upstream statistical model and a downstream optimization layer \citep{dalle2022learning}, with the statistical model trained directly to improve the quality of the decisions produced by the optimization layer \citep{elmachtoub2020decision,schiffer2026combinatorial}. The main contributions of this work are summarized as follows:
\begin{itemize}
\item We introduce a decision-epoch-based dynamic E-ADARP (Dyn-EADARP) that decouples reoptimization from individual request arrivals and makes dispatch timing an explicit decision. This allows requests to be strategically postponed and grouped with future arrivals for better ride-sharing opportunities, while avoiding unnecessary request-by-request reoptimization.

\item We develop an ML--CO policy for the Dyn-EADARP based on the COAML framework of \cite{baty2024combinatorial}, with substantial adaptations to accommodate the more complex routing and operational constraints of the E-ADARP. In particular, we design a problem-tailored combinatorial optimization (CO) oracle based on an efficient heuristic, which enables the learning framework to jointly handle the request selection, routing, scheduling, and recharging decisions arising at each epoch.

\item Extensive numerical experiments demonstrate the effectiveness of the proposed policy. The ML--CO policy solves Dyn-EADARP instances with nearly 500 requests in about 6 seconds on average. Compared with standard benchmark policies, the ML--CO policy reduces the objective value by 8.7\%--14.3\% and achieves an average gap of 4.8\% to the \lb.
The experiments also provide several managerial insights. The first insight is the value of strategic postponement. We show that strategically postponing some requests can create better ride-sharing opportunities with future requests, leading to lower routing costs. The second insight is the value of route revision. Allowing previously planned routes to be revised can substantially improve solution quality and reduce the number of dispatched vehicles. Finally, more frequent decision making does not necessarily lead to better performance.

\end{itemize}

The remainder of the paper is organized as follows. Section~\ref{sec:literature review} reviews the related literature on the dynamic DARP, E-ADARP and the COAML. Section~\ref{sec:DynEADARP} describes the problem setting and formulates the Dyn-EADARP as a Markov Decision Process. Section~\ref{sec:policy_architecture} introduces the policy architecture based on the COAML framework. Section~\ref{sec:learning_algorithm} presents the learning algorithm, including the loss function and the anticipative oracle used to generate training data. Section~\ref{sec:numerical_results} describes the dynamic instance generation and benchmark policies, and presents the numerical results. Section~\ref{sec:conclusion} concludes the paper and discusses future extensions.

\section{Literature review} \label{sec:literature review}

In this section, we review related works along two directions relevant to this paper: works on the dynamic DARP and E-ADARP (Section~\ref{sec: review of dynamic VRP}), and works that employ combinatorial optimization augmented machine learning (COAML) (Section~\ref{sec: review of COAML}).

\subsection{Dynamic DARP and E-ADARP} \label{sec: review of dynamic VRP}

The dynamic DARP extends the static DARP by allowing service requests and system states to evolve over time. In this setting, decisions must be taken online, often under uncertainty, and solutions are repeatedly updated as new information becomes available. A central distinction in the literature concerns whether future service requests are anticipated or not. This distinction is independent of other sources of uncertainty, such as travel times, cancellations, or vehicle disruptions, as considered, for example, by \citet{Xiang2008} and \citet{Schilde2014}. We therefore distinguish between approaches that react only to currently available demand information and approaches that explicitly account for potential future requests.

\paragraph{Without anticipation of future service requests (non-anticipatory)}

Early work on the dynamic DARP without anticipation of future requests dates back to \cite{Psaraftis1980}, who developed a dynamic programming approach for a single-vehicle setting in which the solution is updated whenever a new request arrives, using only currently available information. A widely adopted strategy in this literature is to maintain a set of vehicle routes and update it as new requests become available, using fast insertion heuristics, repeated reoptimization, or combination of both, as illustrated in, among others, \cite{Coslovich2006}, \cite{beaudry2010}, \cite{Berbeglia2012}, \cite{Markovic2015}, and \cite{Souza2022}. More recently, \cite{bongiovanni2022machine} considered the electric extension of the dynamic DARP and proposed a large neighborhood search metaheuristic in which machine learning is used to guide the selection of neighborhood operators. \citet{huang2025genetic} proposed a genetic-programming hyper-heuristic for the dynamic electric DARP, learning vehicle- and request-allocation policies.

\paragraph{With anticipation of future service requests (anticipatory)}

A different line of research explicitly accounts for the anticipation of future demand, typically within a stochastic and dynamic framework. A general methodological foundation for anticipatory stochastic dynamic vehicle routing problems is provided by \cite{Ulmer2019,Ulmer2020}. Within the dynamic DARP and E-ADARP literature, anticipatory approaches differ notably in the timing at which decisions are made.

Many anticipatory approaches operate in an event-driven setting, in which the arrival of a new request triggers a new decision, possibly together with other operational events. \citet{Sayarshad2018} proposed a non-myopic dynamic DARP based on a Markov decision process and a multi-server queuing approximation, using an infinite-horizon estimate of opportunity costs and future profits to guide real-time vehicle assignment and routing decisions. \citet{ritzinger2022comparison} compared a sample-scenario approach and anticipatory waiting strategies for a dynamic patient transportation problem. Their results show that incorporating information about future requests improves solution quality, while simple waiting strategies can outperform the more sophisticated scenario-based approach. \citet{Heitmann2023} addressed the dynamic DARP with capacity management by combining value function approximation (VFA) and multiple-scenario approach (MSA): MSA guides routing decisions, while VFA governs the acceptance or rejection of incoming requests. Building on this, \citet{HEITMANN2024} proposed an improved VFA method that first reduces the dimensionality of the state representation and then progressively increases it, improving the computational performance and learning speed of the VFA component. \citet{Ackermann2025} proposed a multiple-plan approach that maintains several alternative feasible plans to preserve flexibility toward future requests. They also investigated an anticipatory MSA extension, but found no improvement over the original approach. \citet{schulz2026} considered a dynamic DARP that maximizes the number of accepted requests, where customer acceptance is modeled as a function of the relative detour compared to the direct travel time. They developed an efficient insertion method that anticipates future requests through a potential function measuring the attractiveness of current tours for possible future requests. Finally, \citet{Tomandl2026} studied a dynamic E-ADARP with penalties for late pickups, solving it via a rolling-horizon large neighborhood search guided by two reward functions learned through reinforcement learning, one incentivizing early charging and the other waiting, both taking as input the current state and the expected number of future requests.

A smaller number of works decouple decisions from individual request arrivals and instead process requests periodically. \citet{Tafreshian2021} proposed a proactive shuttle-dispatching framework with a discretized time horizon, where requests arriving during the preceding interval are processed jointly and vehicles may switch among routes constructed from forecast demand. Similarly, in the context of on-demand ride-pooling systems, \citet{Fielbaum2022} considered fixed assignment intervals and proposed anticipatory routing mechanisms that modify both vehicle--request assignments and vehicle routing so as to steer the fleet toward states that are better positioned for future demand.

These periodic approaches are closer to our decision timing, since several requests are processed jointly at fixed decision times. However, their decisions mainly concern assignment and routing. In our setting, dispatch timing is itself optimized: requests do not have to be served immediately after they arrive. At each decision epoch, the operator decides whether to serve them now or postpone them, so that new requests arriving later may create better ride-sharing opportunities.

\subsection{Combinatorial Optimization Augmented Machine Learning (COAML)}\label{sec: review of COAML}

In the context of combined combinatorial optimization (CO) and machine learning (ML) approaches, a well-known paradigm is the \textit{predict-then-optimize}, which predicts the unknown parameters and subsequently applies CO to a parametrized optimization model (e.g., \cite{bertsimas2020predictive}). In this sequential learning and optimization paradigm, the ML layer is typically trained to improve predictive accuracy without directly accounting for how prediction errors affect the subsequent optimization task. Consequently, predictions that are accurate from a statistical perspective are not necessarily those that lead to the best downstream decisions. To better align prediction and decision quality, \cite{elmachtoub2022smart} proposed \textit{smart predict-then-optimize} pipelines, which train the ML layer using a tailored loss function that quantifies the decision error. A detailed review of \textit{predict-then-optimize} and \textit{smart predict-then-optimize} can be found in \cite{sadana2025survey}. COAML provides a more general learning architecture than \textit{smart predict-then-optimize}. Both approaches rely on a CO oracle to map the output of the ML model to a decision. However, in \textit{smart predict-then-optimize}, the ML model is trained to predict the unknown parameters of the downstream optimization problem, typically its cost vector. In more general COAML architectures, the ML output does not need to correspond to observable optimization parameters and can instead represent learned scores or weights whose purpose is to guide the CO oracle toward high-quality feasible solutions. One challenge of applying COAML pipelines is to derive meaningful gradients of the respective CO layer to allow for backpropagation \citep{agrawal2019differentiable}. To handle this problem, recent approaches proposed stochastic perturbation techniques, including additive \citep{berthet2020learning} and multiplicative perturbations \citep{dalle2022learning}, which enable differentiation through CO layers. These approaches have been successfully applied to COAML pipelines in supervised learning settings with Fenchel-Young losses \citep{blondel2020learning}.

The COAML paradigm can be used to solve a wide variety of problem classes, ranging from hard (possibly deterministic) combinatorial problems to multi-stage stochastic optimization problems. Among the many applications of this paradigm, two works stand out as particularly close to the setting studied here. \citet{jungel2025learning} addressed online dispatching and rebalancing for autonomous mobility-on-demand fleets, modeling the decision problem as a $k$-disjoint shortest path problem with ML-predicted arc weights. \citet{baty2024combinatorial} tackled a dynamic vehicle routing problem with time windows (VRPTW) with dispatching waves, where the CO layer solves a prize-collecting VRPTW with learned request prizes.

\subsection{Conclusion and motivation}

To the best of our knowledge, existing works on the dynamic DARP and E-ADARP do not jointly capture the decision structure studied in this paper. In particular, existing approaches predominantly follow an event-driven setting, where the solution is reconsidered each time a new request arrives. In contrast, we consider a decision-epoch setting in which requests accumulated between consecutive epochs are processed jointly. This introduces an additional decision: at each epoch, the service provider must determine not only how to serve the currently available requests, but also which requests should be served now and which should be postponed, thereby shaping future ride-sharing opportunities.

The resulting decision problem also combines several operational features that, to the best of our knowledge, have not been jointly addressed in previous learning-based approaches: epoch-based request selection, pickup--drop-off coupling, ride-sharing, service-quality constraints, electric-vehicle charging, and the revision of ongoing vehicle routes. In particular, previously planned but unexecuted route segments are not treated as fixed decisions; they can be revised when new information becomes available, allowing newly selected requests to be inserted into routes of vehicles that have already been dispatched. Furthermore, the objective can account for both total vehicle travel time and excess user ride time, allowing operational efficiency to be balanced against passenger service quality.

These characteristics also distinguish our work from the COAML applications that are most closely related to ours. Neither \citet{jungel2025learning} nor \citet{baty2024combinatorial} accounts for ride-sharing among requests, whereas we explicitly model ride-sharing and its impact on service quality. Moreover, in \citet{baty2024combinatorial}, routes are fixed once vehicles are dispatched, and a dispatched vehicle cannot serve requests revealed in later epochs. In contrast, we retain the possibility of revising unexecuted route portions and inserting new requests into routes of vehicles that are already in operation.

\section{Dynamic E-ADARP: Problem Description and Formulation} \label{sec:DynEADARP}
In this section, we describe the Dyn-EADARP in Section \ref{subsec:problem_defn}.
Then, we formulate the Dyn-EADARP as a Markov Decision Process with finite horizon in Section \ref{subsec:math_formulation}. 

\subsection{Overall Description} \label{subsec:problem_defn}
The Dyn-EADARP is a dynamic variant of the static E-ADARP and shares several of its settings \citep{bongiovanni2019electric} which are summarized as follows:
\begin{itemize}
    \item It is defined on a directed graph $G=(V,A)$, where $V$ denotes the vertex set and $A = \{(i,j):i,j \in V, i \neq j\}$ denotes the arc set. Then, $V$ can be further partitioned into several subsets, i.e., $V=P\cup D\cup S\cup O\cup F$. Here, $P$ and $D$ denote the sets of pickup and drop-off nodes, respectively, $S$ is the set of recharging stations, and $O$ and $F$ are the sets of origin and destination depots. The set of customer nodes is denoted as $N$ and $N = P \cup D$.

    \item Each dial-a-ride request $i$ is represented by a pickup node $i\in P$ and its corresponding drop-off node $n+i\in D$, where $n$ is the total number of requests. The maximum ride time associated with request $i$ is denoted by $m_i$. Each node $i\in V$ is associated with a time window $[e_i,l_i]$, where $e_i$ and $l_i$ are the earliest and latest service start times, respectively, as well as a load $q_i$ and a service duration $s_i$. For each pickup node $i\in P$, $q_i>0$, while its corresponding drop-off node satisfies $q_{n+i}=-q_i$. For all nodes $j\in S\cup O\cup F$, we set $q_j=s_j=0$.

    \item Each vehicle $k\in K$ starts from an origin depot $o\in O$ and terminates at a destination depot $f\in F$. Vehicles may visit recharging stations in $S$ when needed and only when they have no passengers onboard. For each arc $(i,j)\in A$, let $t_{i,j}$ and $b_{i,j}$ denote the travel time and battery consumption, respectively. Battery discharge and recharge are assumed to be proportional to travel and charging times, respectively. Let $\alpha$ denote the recharging rate. 
    Partial recharging is allowed, and vehicles must reach their destination depot with a battery level of at least $\gamma Q$, where $\gamma$ is the minimum-battery-level ratio and $Q$ is the battery capacity. Recharging stations may be visited multiple times by vehicles. The triangle inequality is assumed to hold for both $t_{i,j}$ and $b_{i,j}$. Finally, the electric autonomous vehicles (EAVs) are homogeneous, with vehicle capacity $C$ and battery capacity $Q$.
\end{itemize}
Unlike the static E-ADARP, where information on all requests is known at the beginning of the planning horizon, the Dyn-EADARP considers an online setting in which requests are released over the planning horizon $[0,T]$. The planning horizon is divided into $E$ \textit{epochs} of equal length, denoted by
$\mathcal{E}={[\tau_0,\tau_1],[\tau_1,\tau_2],\ldots,[\tau_{E-1},\tau_E]}$, where $\tau_e$ denotes the start time of epoch $e$, for $e\in{0,\ldots,E-1}$. Incoming requests during $[\tau_{e-1},\tau_e]$ are collected and become available for decision making at time $\tau_e$. These requests are associated with a release time of $\tau_e$, indicating when their information becomes available to the system and when they become ready for service.

Let $R_e$ denote the set of requests available at time $\tau_e$, consisting of all requests that have been released but not yet selected for service. 
At the beginning of epoch $e$, the control center selects a subset $\mathcal{D}_e\subseteq R_e$ to be served, while the remaining requests $R_e\setminus\mathcal{D}_e$ are postponed to future epochs. It then determines the routing and scheduling decisions for the selected requests subject to the operational constraints (as defined in Section \ref{subsubsec:epoch_reoptimization}). We assume that the fleet is sufficiently large so that all requests can eventually be served within the planning horizon. To ensure non-anticipativity, a vehicle can travel from the predecessor node to the pickup node of a request only after the request has been released and selected for service. Meanwhile, in the Dyn-EADARP, unexecuted portions of previously planned routes may be revised at the beginning of the next epoch, as illustrated in Example~\ref{example1}. When entering a new epoch, each route is divided into a fixed portion and an adjustable portion. The fixed portion includes all nodes that have already been visited, as well as, for each vehicle, the upcoming node to be visited (e.g., node $D_1$ in Example \ref{example1}).
The remaining portion can be reoptimized using newly available information (e.g., the dashed path in Example \ref{example1}). This flexibility distinguishes our setting from many existing dynamic routing approaches (e.g., \cite{baty2024combinatorial}), in which previously planned routes are fixed once determined.


\begin{exmp}[Revise the adjustable portion of a vehicle route]\label{example1}
Consider one vehicle and three requests $r_i=(P_i,D_i)$, $i\in\{1,2,3\}$, with an epoch duration of 10 minutes. Request $r_3$ is released at decision epoch 1, which starts at $\tau_1=10$ and ends at 20, whereas $r_1$ and $r_2$ are released at epoch 0. The service duration at each node is ignored for simplicity. Figure~\ref{fig:example1} shows the time window of each node and the travel time of each arc. Let $B_{v}$ denote the service start time at node $v$ in the initial plan.

\begin{figure}[htbp]
\centering
\includegraphics[width=\textwidth]{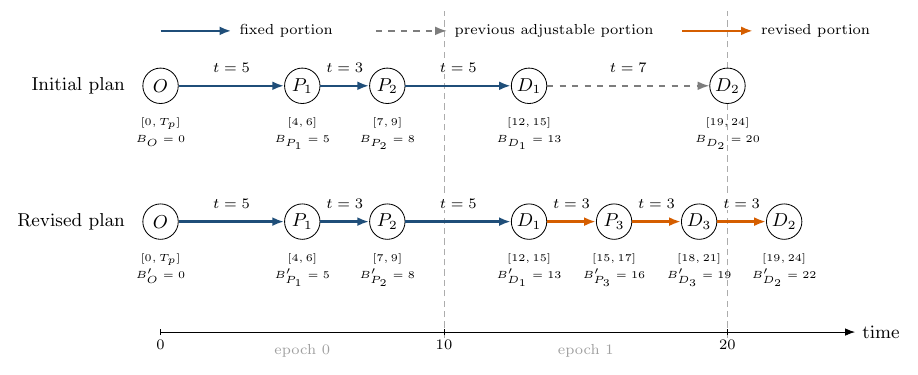}
\caption{\centering Illustration of route and schedule revision across decision epochs.}
\label{fig:example1}
\end{figure}
\end{exmp}

At epoch 0, requests $r_1$ and $r_2$ are selected for service, and the corresponding route and schedule are shown in the upper route. By the start of epoch 1, the last node that has a service start time no later than $\tau_1$ is $P_2$ and its successor node is $D_1$. Therefore, the route until $D_1$ is fixed while the remaining part (i.e., $D_2$) can be adjusted. At epoch 1, the newly released request $r_3$ is inserted immediately after $D_1$, and the remaining route and schedule (represented by $B_i'$) are revised accordingly. It should be noted that the fixed portion remains unchanged.

\subsection{Mathematical Formulation} \label{subsec:math_formulation}

\subsubsection{Markov decision process formulation} \label{subsubsec:markov_decision_process}

We formulate the Dyn-EADARP as a finite-horizon Markov decision process, where the system evolves over a sequence of decision epochs and the decision at each epoch is made based on the current state of the system. To facilitate understanding, we introduce several terms:
\paragraph{\textbf{System state}}
At the beginning of epoch $e$, the system state is defined as
\[
\state=(\tau_e,R_e,\Xi_e),
\]
where $\Xi_e$ is the current fleet state including the current location, load, battery level, and the current plan (i.e., committed nodes and planned schedule) for each EAV.

\paragraph{\textbf{Feasible action}}
Given the system state $\state$ at $\tau_e$, the service provider determines a feasible action $\boldsymbol{a}_e$, including a feasible decision $\boldsymbol{y}_e$ and an associated feasible plan $\Pi_e$ for vehicle routing, scheduling, and charging, given $\boldsymbol{y}_e$:
\[
\boldsymbol{a}_e = (\boldsymbol{y}_e, \Pi_e)
\]
where the feasible decision $\boldsymbol{y}_e$ is a vector that indicates which request is selected at epoch $e$ to serve
\[
\boldsymbol{y}_e = (y_i)_{i \in R_e},
\qquad
y_i =
\begin{cases}
1, & \text{if request } i \text{ is selected},\\
0, & \text{otherwise},
\end{cases}
\quad \forall i \in R_e.
\]
Therefore, deciding $\boldsymbol{y}_e$ is equivalent to choosing a set of requests $\mathcal{D}_e \subseteq R_e$ to serve. Set $\mathcal{D}_e$ includes two types of requests:
\begin{itemize}
    \item Must-dispatch requests which are mandatory to be selected as they cannot be postponed (Definition \ref{must_dispatch_request}).
    \item Postponable requests which can be postponed to the next epoch and are optional to be selected.
\end{itemize}
\begin{defn}[\Must requests] \label{must_dispatch_request}
Let $T_{i,e+1}'$ denote the earliest possible service start time at the pickup node of request $i$ if it is postponed to epoch $(e+1)$,
which is defined as
\[
T_{i,e+1}'
=
\max (e_i, \tau_{e+1}
+ \min_{o\in O}t_{o,i})
\]
where $t_{o,i}$ is the travel time from origin depot $o$ to pickup node $i$ so that $\tau_{e+1}
+ \min_{o\in O}t_{o,i}$ gives the earliest possible arrival time with a newly assigned vehicle.
Then, request $i$ is called \Must if
\[
T_{i,e+1}' > l_i
\qquad \text{or} \qquad
T_{i,e+1}' + s_i + t_{i,n+i} > l_{n+i}.
\]
\end{defn}
The first condition captures time-window infeasibility on pickup node $i$ after postponement, while the second indicates the time-window infeasibility on drop-off node $n+i$, even with the earliest possible pickup time at $i$ and direct travel $t_{i,n+i}$.
After selecting $\mathcal{D}_e$, the decision maker has to determine a feasible plan $\Pi_e$ which requires solving an epoch-level optimization problem, as described in Section~\ref{subsubsec:epoch_reoptimization}.

\paragraph{\textbf{System evolution}}
The plan $\Pi_e$ is executed during $[\tau_e,\tau_{e+1})$. At the beginning of epoch $(e+1)$, the fixed portion of each route is determined and remains unchanged, while the adjustable part can be revised based on newly released requests. The vehicle locations, loads, battery levels, onboard requests, and request set are then updated accordingly, yielding the next state $\boldsymbol x_{e+1} = (\tau_{e+1},R_{e+1},\Xi_{e+1})$.

\paragraph{\textbf{Policy and objective}}
Let $\mathcal{X}$ denote the state space. A policy
$\pi:\mathcal{X}\rightarrow\mathcal{Y}$ maps each state $\state$ to a
feasible decision $\pi(\state)\in\mathcal{Y}(\state)$. Our objective is to find a policy $\pi$ that minimizes the expected total cost over the horizon $[0, T]$.
\begin{equation}
\min_{\pi}\;
\mathbb{E}_{\pi}
\left[
\sum_{e=0}^{E-1}c_e(\pi(\state))
\right]
\label{dyn_mdp_obj}
\end{equation}
where $c_e$ denotes the total
cost incurred by all vehicles during $[\tau_e,\tau_{e+1})$ with policy $\pi$. In our paper, we consider a weighted-sum objective including total travel time and total excess user ride time, as in \cite{bongiovanni2019electric}. The excess user ride time is defined as the difference between actual user ride time and direct travel time $t_{i,n+i}$.

\subsubsection{Epoch-level optimization problem} \label{subsubsec:epoch_reoptimization}
At the beginning of epoch $e$, the service provider optimizes the adjustable portions of the vehicle routes based on the current system state $\state$ and the decision $\boldsymbol y_e$. To determine which part of a route is adjustable when entering epoch $e$, we use the planned service start times of its nodes. Specifically, let $a_k^e$ denote the last node (excluding the depot) on the current route whose service start time is no later than $\tau_e$. We further denote the immediate successor of $a_k^e$ on the current route by $a_k^{e,+}$. Then, all nodes up to and including $a_k^{e,+}$ define the fixed portion of the route (denoted by $\Gamma_k^{e,\mathrm{fix}}$), while the remaining portion can be adjusted based on newly revealed information (denoted by $\Gamma_k^{e,\mathrm{adj}}$).
Specifically, for each node on $\Gamma_k^{e,\mathrm{fix}}$, its time window is fixed at the actual service start time. 
The reoptimization of vehicle $k$'s route is therefore anchored at $a_k^{e,+}$, whose service start time and associated load and battery state are inherited from the previous plan. All requests following $a_k^{e,+}$ may be revised.
For an empty route, the fixed portion is empty and the vehicle remains available at its depot at time $\tau_e$.

The epoch-level problem follows the static E-ADARP formulation of \citet{bongiovanni2019electric}, including standard flow-conservation, time-window, pickup--drop-off precedence, maximum-ride-time, vehicle-capacity, battery, charging, and minimum-battery-level constraints.
Based on the static E-ADARP formulation, several adaptations are made to accommodate the dynamic setting. 
A request is included in the planned routes if and only if it is selected for service. A vehicle may travel from the predecessor node to a pickup node only after the corresponding request is released and selected.
Table \ref{tab:notation} summarizes our notations and the corresponding description.

\begin{table}[htbp]
\centering
\begin{threeparttable}
\caption{\centering Summary of main notations}
\label{tab:notation}
\renewcommand{\arraystretch}{0.85}
\small
\begin{tabular}{@{} p{0.28\textwidth} p{0.7\textwidth} @{}}
\toprule
\textbf{Notation} & \textbf{Description} \\
\midrule
$G=(V,A)$ & Directed graph. \\
$P,D,N,S,O,F$ & Pickup, drop-off, customer nodes, charging station, origin, and destination depot sets. \\
$R_e,\mathcal{D}_e$ & Available and selected request sets at epoch $e$. \\
$T,E,\tau_e$ & Planning horizon, number of epochs, and start time of epoch $e$. \\
$[e_i,l_i],s_i,m_i$ & Time window, service duration, and maximum ride time. \\
$K,C,Q$ & Vehicle set, vehicle capacity, and battery capacity. \\
$q_i$ & Load change at node $i$. \\
$t_{i,j},b_{i,j}$ & Travel time and battery consumption on arc $(i,j)$. \\
$\alpha$ & Recharging rate. \\
$\gamma Q$ & Minimum battery level at destination depot. \\
$a_k^e,a_k^{e,+}$ & Last fixed node and its successor on vehicle $k$'s route. \\
$\Gamma_k^{e,\mathrm{fix}},
 \Gamma_k^{e,\mathrm{adj}}$ & Fixed and adjustable route portions of vehicle $k$. \\
$\state=(\tau_e,R_e,\Xi_e)$ & System state at epoch $e$. \\
$\boldsymbol{a}_e = (\boldsymbol{y}_e,\Pi_e)$ & Action at epoch $e$. \\
$\boldsymbol{y}_e$ & Decision at epoch $e$. \\
$\Pi_e$ & Routing, scheduling, and charging plan associated with $\boldsymbol{y}_e$. \\
$\pi$ & Decision policy. \\
$c_e$ & Stage cost at epoch $e$. \\

\bottomrule
\end{tabular}
\end{threeparttable}
\end{table}

\section{Policy Architecture}
\label{sec:policy_architecture}
The Dyn-EADARP is challenging not only because of its combinatorial complexity, but also because decisions must be made under uncertainty about future requests. To efficiently make these decisions, we develop a policy based on the COAML framework. The COAML architecture combines an upstream statistical model with a downstream CO layer, allowing the statistical model to learn state-dependent information that directly guides the resulting operational decisions.
The overall framework is illustrated in Figure~\ref{pipelines}. At each epoch $e$, the system state $\state$ is passed to a statistical model $\varphi_w$, which predicts a score vector $\boldsymbol{\theta}_e$. The predicted scores are then used to parameterize the CO layer, which returns a feasible decision $\boldsymbol{y}_e$. During training, the statistical model is optimized through a loss function that measures the discrepancy between the resulting decision $\boldsymbol{y}_e$ and its target $\bar{\boldsymbol{y}}_e$. 
\begin{figure}[ht]
\centering
\includegraphics[width=\textwidth]{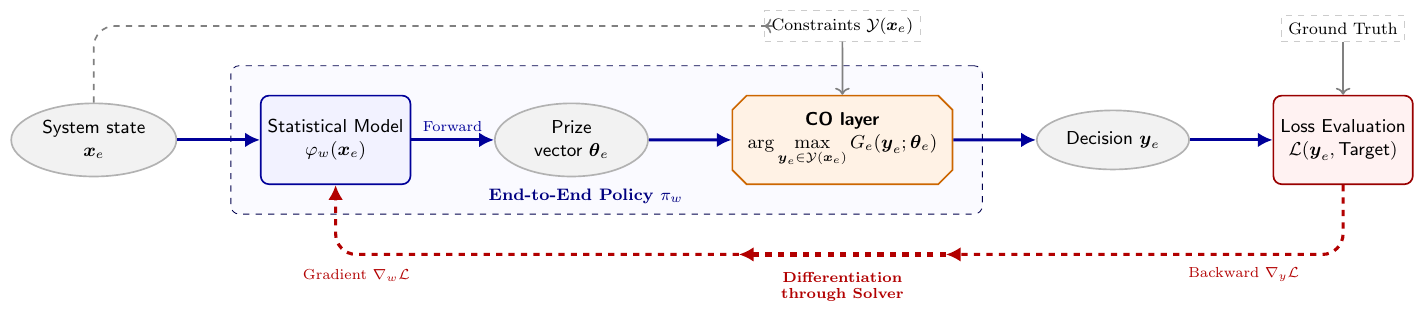}
\caption{\small General COAML architecture as presented in \cite{schiffer2026combinatorial}. A statistical model parameterizes a surrogate optimization problem. We propagate gradients backwards through the CO layer, enabling end-to-end training of the policy $\pi_w$ to minimize a downstream decision loss}
\label{pipelines}
\end{figure}

In our setting, the key decision involves which requests should be selected for service (i.e., $\boldsymbol{y}_e$). This request-selection decision involves an inter-epoch trade-off: serving a request at the current epoch incurs an immediate routing cost, but can also affect future ride-sharing and routing opportunities.
To capture this trade-off without explicitly solving the full dynamic problem, we associate each request $i\in R_e$ with a prize $\theta_{i,e}$, which represents the learned value of selecting this request at the current epoch.

Given these prizes, the CO layer jointly selects requests and constructs the corresponding routing, scheduling, and charging plan. 
Since each request may either be selected or postponed, with selected requests contributing a prize while inducing additional routing cost, the resulting epoch-level problem naturally takes the form of a prize-collecting E-ADARP.
The statistical model $\varphi_w$ predicts the prize vector $\boldsymbol{\theta}_e$ from the current state $\state$, while the CO layer converts these prizes into a feasible decision. Proposition~\ref{proposition1} further shows that this prize-based representation is enough and any optimal epoch-level decision of the Dyn-EADARP can be represented as an optimal solution to a prize-collecting E-ADARP for an appropriate prize vector.

In the following, we first present the prize-collecting E-ADARP and the corresponding CO layer in Section~\ref{subsec:CO_layer}. We then introduce the statistical model used to predict request prizes in Section~\ref{subsec:statsistical_model}. 

\subsection{Prize-collecting E-ADARP objective and CO layer}\label{subsec:CO_layer}

Given the predicted prize $\theta_{i,e}$ for each released and unserved request $i\in R_e$, the prize-collecting E-ADARP determines which requests to serve and constructs the corresponding routing, scheduling, and charging plan. Its objective is to maximize the total net profit which balances the total prize collected from selected requests against the resulting travel cost:
\begin{equation}
\label{prize-collection objective}
\max_{\boldsymbol y_e \in\mathcal{Y}(\state)} G_e(\boldsymbol{y}_e;\boldsymbol{\theta}_e) = \boldsymbol{\theta}_e^\top \boldsymbol{y}_e -
c_e(\boldsymbol{y}_e).
\end{equation}
Given the system state $\state$ and the predicted prize vector $\boldsymbol{\theta}_e$, the CO layer applies a heuristic algorithm $\mathcal{H}$ (Section \ref{subsec:heuristic_oracle}) to the prize-collecting E-ADARP and returns a feasible decision $\boldsymbol{y}_e\in\mathcal{Y}(\state)$:

\begin{equation}
\label{co_layer}
\boldsymbol{y}_e
=
\mathcal{H}\bigl(\state,\boldsymbol{\theta}_e\bigr),
\qquad
\boldsymbol{y}_e\in\mathcal{Y}(\state).
\end{equation}
Similar to \cite{baty2024combinatorial}, we also have the following proposition for the Dyn-EADARP:
\begin{proposition} \label{proposition1}
    For any $\state$, there exists a vector $\boldsymbol{\theta} \in \mathbb{R}^{|{R}_e|}$ such that any optimal solution of \eqref{prize-collection objective} is an optimal decision with respect to the original problem \eqref{dyn_mdp_obj}.
\end{proposition}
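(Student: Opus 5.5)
The plan is to follow the construction of \cite{baty2024combinatorial}: take an optimal decision of the MDP \eqref{dyn_mdp_obj} at state $\state$ and build prizes that make exactly that decision optimal in \eqref{prize-collection objective}.

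\textbf{Step 1: fix the target decision.} Since the horizon is finite and each state admits finitely many feasible decisions, an optimal policy exists; Bellman's principle then gives an optimal decision $\boldsymbol{y}^\star\in\mathcal{Y}(\state)$. Write $\mathcal{D}^\star=\{i\in R_e: y^\star_i=1\}$. Here $c_e(\boldsymbol y)$ denotes the minimal epoch-level routing cost over plans $\Pi_e$ compatible with $\boldsymbol y$; this cost is finite whenever $\boldsymbol y\in\mathcal{Y}(\state)$. The feasible set $\mathcal{Y}(\state)$ is a finite subset of $\{0,1\}^{|R_e|}$. It already forces every \Must request to be selected, so those coordinates are identical for all feasible $\boldsymbol y$.

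\textbf{Step 2: choose the prizes.} Let $M$ exceed $\max_{\boldsymbol y\in\mathcal{Y}(\state)} c_e(\boldsymbol y)-\min_{\boldsymbol y\in\mathcal{Y}(\state)} c_e(\boldsymbol y)$; this is finite by finiteness of $\mathcal{Y}(\state)$. Set $\theta_i=M$ for $i\in\mathcal{D}^\star$ and $\theta_i=-M$ for $i\notin\mathcal{D}^\star$. For any $\boldsymbol y\neq\boldsymbol y^\star$ in $\mathcal{Y}(\state)$, the term $\boldsymbol\theta^\top\boldsymbol y$ falls short of $\boldsymbol\theta^\top\boldsymbol y^\star$ by at least $M$ for each coordinate where they differ. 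This happens because adding a non-target request costs $M$, and dropping a target request loses $M$. Hence
\[
G_e(\boldsymbol y^\star;\boldsymbol\theta)-G_e(\boldsymbol y;\boldsymbol\theta)\ \ge\ M-\bigl(c_e(\boldsymbol y^\star)-c_e(\boldsymbol y)\bigr)\ >\ 0 .
\]
So $\boldsymbol y^\star$ is the unique maximizer of \eqref{prize-collection objective}. Consequently every optimal solution of the prize-collecting E-ADARP equals $\boldsymbol y^\star$, which is optimal for \eqref{dyn_mdp_obj}. If one also wants the plan component to match, the routing plan attached to the maximizer is any cost-minimizing plan for $\mathcal{D}^\star$. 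Any such plan can be taken in the optimal action, since future evolution depends on the plan only through the state, and we may pick the plan used by the optimal policy.

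\textbf{Main obstacle.} The delicate point is the exact meaning of ``optimal decision with respect to \eqref{dyn_mdp_obj}''. The objective depends on the full action, including $\Pi_e$ and the resulting fleet state $\Xi_{e+1}$, and not only on $\boldsymbol y_e$. Two cost-minimal plans for the same $\mathcal{D}^\star$ could lead to different futures. I would handle this by stating the proposition at the level of the selection $\boldsymbol y_e$, which is what the CO layer outputs. Alternatively, one can note that tie-breaking among minimal-cost plans can be fixed to the optimal policy's plan. The finiteness of $\mathcal{Y}(\state)$, which guarantees a finite $M$, is routine given the finite request set.
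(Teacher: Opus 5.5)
Your proof is correct and uses the same construction as the paper: take an optimal decision $\boldsymbol{y}^\star$, set $\theta_i=M$ on its selected requests and $\theta_i=-M$ elsewhere, and let the prize term dominate the routing cost so that $\boldsymbol{y}^\star$ maximizes \eqref{prize-collection objective}. You are more careful than the paper in two places: you make ``$M$ big enough'' explicit, noting that any $M>\max_{\mathcal{Y}(\state)}c_e-\min_{\mathcal{Y}(\state)}c_e$ works because $\boldsymbol{\theta}^\top(\boldsymbol{y}^\star-\boldsymbol{y})=M\|\boldsymbol{y}^\star-\boldsymbol{y}\|_1$, which also gives uniqueness; and you observe that the claim concerns the selection $\boldsymbol{y}_e$ rather than the full action $(\boldsymbol{y}_e,\Pi_e)$, a point the paper leaves implicit.
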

\begin{proof}
Assume the optimal solution of the original problem \eqref{dyn_mdp_obj} is $\boldsymbol{y}_e^* = (y^*_1, \ldots, y^*_n)$, then by assigning $\boldsymbol{\theta}_e = (\theta_1, \ldots, \theta_n)$ where 
$\theta_i = M$ if $y^*_i = 1$ and $-M$ otherwise for all $i=1,\ldots, n$. Then, it is clear that when $M$ is big enough, the optimal solution of \eqref{prize-collection objective} is $\boldsymbol{y}_e^*$.
\end{proof}

\subsection{Statistical models} \label{subsec:statsistical_model}
The choice of statistical model $\varphi_w$ is generic and determines how contextual information and problem features are encoded into the score vector $\boldsymbol{\theta}$ that interacts with the optimization oracle. A simple example of $\varphi_w$ is a generalized linear model, which parameterizes the output for each request based on its features (see Section \ref{subsec:feature_engineering}). Let $\feature(i,\state)$ denote the feature vector associated with request $i$ and the current system state. The resulting architecture can be written as follows and can be trained directly with any CO layer:
\[
\varphi_w(\state)
=
\left(\boldsymbol w^\top \feature(i,\state)\right)_{i\in R_e},
\]
where $\boldsymbol w$ denotes the learnable weight vector.

More flexible prediction models can also be incorporated into the proposed architecture. In particular, we consider a simple multilayer perceptron (MLP) and a graph neural network (GNN) as alternative statistical models. The MLP applies a small feed-forward network to the features of each request and can capture nonlinear relationships between request characteristics and their predicted prizes. In contrast, the GNN allows information to be exchanged among requests through graph-based message passing, and can therefore capture interactions and dependencies among requests, such as their spatial or temporal compatibility. Implementation details are provided in Section~\ref{subsec:extended_analysis}.

\section{Learning Algorithm} \label{sec:learning_algorithm}
In this section, we first introduce the general learning approach in Section \ref{subsec:general_approach}. Then, we present the loss function used to train the policy in Section \ref{subsec:loss_function}. Then, we describe the proposed heuristic oracle to solve the prize-collecting E-ADARP in Section \ref{subsec:heuristic_oracle}. Finally, we present the generation of anticipative training samples in Section \ref{subsec:anticipative_target}.

\subsection{General approach} \label{subsec:general_approach}
We now describe the general learning approach used to train the ML–CO policy. The statistical model $\varphi_w$ maps a system state $\state$ to a prize vector $\boldsymbol{\theta}_e$, which is then passed to the CO layer to generate a feasible decision $\boldsymbol{y}_e$. 
The objective is to learn parameters $w$ such that the prize vectors predicted by $\varphi_w$ induce request-selection decisions that closely match high-quality target decisions.
In the supervised learning setting, we are given $N$ training pairs
\[
\left\{
\left(\boldsymbol x^{(d)},\bar{\boldsymbol y}^{(d)}\right)
\right\}_{d=1}^{N},
\]
where $\boldsymbol x^{(d)}$ is a sampled system state and $\bar{\boldsymbol y}^{(d)}$ is the
corresponding target decision. The model parameters $w$ are learned by
minimizing
\begin{equation}
\label{eq:sl_score_loss}
\min_w
\frac{1}{N}
\sum_{d=1}^{N}
\mathcal{L} \left(
\varphi_w(\boldsymbol x^{(d)}),
\bar{\boldsymbol y}^{(d)}
\right).
\end{equation}
where $\mathcal{L}$ is a loss function.

A key challenge is that the decision returned by the discrete CO layer is piecewise constant with respect to the predicted prizes, which makes direct gradient-based training difficult \citep{schiffer2026combinatorial}. We therefore use a perturbed surrogate loss that preserves the link between the predicted prizes and the resulting decisions while providing useful gradient information \citep{dalle2022learning}. The loss and its gradient are presented in the next part.

\subsection{Loss function} \label{subsec:loss_function}

For a state $\state$ with predicted prize vector
$\boldsymbol{\theta}_e=\varphi_w(\state)$, 
a natural loss is the gap between the objective value obtained by the CO layer under the prize vector $\boldsymbol{\theta}_e$ and the objective value of the fixed target decision $\bar{\boldsymbol y}_e$ evaluated under the same prizes:
\begin{equation}
\label{eq:unperturbed_loss}
\mathcal{L}
(\boldsymbol{\theta}_e,\bar{y}_e)
=
\max_{\boldsymbol{y}_e\in\mathcal{Y}(\state)}
G_e(\boldsymbol{y}_e;\boldsymbol{\theta}_e)
-
G_e(\bar{\boldsymbol y}_e;\boldsymbol{\theta}_e).
\end{equation}
However, Equation \eqref{eq:unperturbed_loss} involves optimization over a discrete feasible set $\mathcal{Y}(\state)$, making the loss nonsmooth with respect to $\boldsymbol{\theta}_e$. 
Indeed, $\arg\max_{y \in \mathcal{Y}(\state)}G_e(\boldsymbol{y}_e ; \boldsymbol{\theta}_e)$ is piecewise constant in $\boldsymbol{\theta}_e$ and the gradient is almost zero everywhere. Consequently, directly differentiating through
the argmax does not provide a suitable learning signal.
Following \cite{baty2024combinatorial} and \cite{jungel2025learning}, we perturb the predicted prizes by $\epsilon Z$ to obtain a smoother learning signal, where $Z$ is a standard Gaussian vector of the same dimension
as $\boldsymbol{\theta}_e$ and $\epsilon>0$ controls the perturbation
magnitude. The perturbed loss is defined as
\begin{equation}
\label{eq:perturbed_loss}
\mathcal{L}_{\epsilon}
(\boldsymbol{\theta}_e,\bar{y}_e)
=
\mathbb{E}_{Z}
\left[
\max_{\boldsymbol{y}_e\in\mathcal{Y}(\state)}
G_e
\left(
\boldsymbol{y}_e;
\boldsymbol{\theta}_e+\epsilon Z
\right)
\right]
-
G_e
\left(
\bar{y}_e;
\boldsymbol{\theta}_e
\right).
\end{equation}

Following the same idea as in \cite{baty2024combinatorial}, we can show that $\mathcal{L}_{\epsilon}$ is convex in $\boldsymbol{\theta}_e$ and derive its gradient. Proposition \ref{prop:perturbed_gradient} shows that this gradient has a simple interpretation.

\begin{proposition}[Gradient of the perturbed loss]
\label{prop:perturbed_gradient}
Suppose that the perturbed CO problem is solved exactly. For a realization
of $Z$, define
\begin{equation}
\label{eq:perturbed_optimal_decision}
\boldsymbol{y}_e^*(Z)
\in
\arg\max_{\boldsymbol{y}_e\in\mathcal{Y}(\state)}
G_e
\left(
\boldsymbol{y}_e;
\boldsymbol{\theta}_e+\epsilon Z
\right).
\end{equation}
The gradient of
$\mathcal{L}_{\epsilon}$ with respect to the predicted prize vector
$\boldsymbol{\theta}_e$ is
\begin{equation}
\label{eq:perturbed_gradient}
\nabla_{\boldsymbol{\theta}_e}
\mathcal{L}_{\epsilon}
=
\mathbb{E}_{Z}
\left[
\boldsymbol{y}_e^*(Z)
\right]
-
\bar{\boldsymbol{y}}_e.
\end{equation}
\begin{proof}
    See \ref{app:FY_loss}.
\end{proof}
\end{proposition}

Proposition~\ref{prop:perturbed_gradient} shows that the learning signal is the difference between the expected request-selection vector induced by the perturbed prizes and the target request-selection vector. In practice, for computational efficiency, our CO layer uses a heuristic (Section~\ref{subsec:heuristic_oracle}) instead of an exact solver. For each state, we draw $N_{\mathrm{mc}}$ independent perturbations and compute
\begin{equation}
\label{eq:heuristic_perturbed_decision}
\widetilde{\boldsymbol{y}}_e^{(q)}
=
\mathcal{H}
\left(
\state,
\boldsymbol{\theta}_e+\epsilon Z^{(q)}
\right),
\qquad
q=1,\ldots,N_{\mathrm{mc}}.
\end{equation}
The gradient used for training is approximated by
\begin{equation}
\label{eq:surrogate_gradient}
\widetilde{\nabla}_{\boldsymbol{\theta}_e}
\mathcal{L}_{\epsilon}
=
\frac{1}{N_{\mathrm{mc}}}
\sum_{q=1}^{N_{\mathrm{mc}}}
\widetilde{\boldsymbol{y}}_e^{(q)}
-
\bar{\boldsymbol{y}}_e,
\end{equation}
which serves as a surrogate gradient for updating the statistical model.

\subsection{Heuristic oracle for the prize-collecting E-ADARP} \label{subsec:heuristic_oracle}
The heuristic oracle must satisfy two main requirements. First, it must be computationally efficient, since for each training sample it is called multiple times to solve perturbed prize-collecting E-ADARP instances when evaluating the loss in Equation~\eqref{eq:perturbed_loss}. Second, the fixed portion $\Gamma_k^{e,\mathrm{fix}}$ of existing routes must remain unchanged, while only the adjustable portions can be revised subject to the operational constraints described in Section~\ref{subsubsec:epoch_reoptimization}. We therefore develop an efficient greedy heuristic $\mathcal{H}$ that incorporates an exact feasibility-checking procedure, as detailed in Section \ref{subsubsec:feasibility_check}.

\subsubsection{Algorithm design}
The heuristic $\mathcal{H}$ takes as input the current system state $\state$ and predicted prize vector $\boldsymbol{\theta}_e$ for the currently released and unserved requests.
It returns a decision $\widetilde{\boldsymbol{y}}_e$ along with updated vehicle routes and service start times. 
The heuristic consists of several steps, summarized as follows:

\paragraph{\textbf{Step 1: Identify the fixed and adjustable portions for each dispatched vehicle}} At the beginning of epoch $e$, we first identify $\Gamma_k^{e,\mathrm{fix}}$ and  $\Gamma_k^{e,\mathrm{adj}}$ of each dispatched vehicle $k \in K$, as described in Section \ref{subsubsec:epoch_reoptimization}.

\paragraph{\textbf{Step 2: Insert \Must requests}} Must-dispatch requests are processed in increasing order of latest pickup time windows. For each \Must request, we enumerate all pickup and drop-off insertion positions in $\Gamma_k^{e,\mathrm{adj}}$ for dispatched vehicles, as well as the option of assigning the request to a new vehicle. 
    
Each time a request is inserted into a route, we perform the feasibility check described in Section~\ref{subsubsec:feasibility_check}. If the resulting route violates only the battery constraints, we attempt to repair feasibility by inserting a recharging station following the procedure detailed in Section~\ref{subsubsec:insertion_recharging_station}. Note that at least one feasible insertion always exists, since we assume a sufficiently large fleet size and the request can be assigned to an empty route served by a newly dispatched vehicle.

Among all feasible insertions, we select the one with the minimum marginal cost $\Delta C_i$, defined as the increase in cost caused by inserting the request and a recharging station (if necessary):
\[
\Delta C_i = \mathrm{Cost}(r^{\mathrm{new}})-\mathrm{Cost}(r^{\mathrm{orig}}),
\]
where $r^{\mathrm{orig}}$ denotes the original route and $r^{\mathrm{new}}$ denotes the resulting route after the insertion. The routes are then updated before processing the next request.

\paragraph{\textbf{Step 3: Select and insert \Postpone requests based on net profit}} After inserting all \Must requests, we sort the \Postpone requests in $R_e$ in descending order of their predicted prizes. Because the adjustable portions of the routes can be revised at subsequent epochs, it is unnecessary to construct, at the current epoch, a complete plan for requests that would only be picked up in later epochs. 
We therefore restrict attention to \Postpone requests that can be picked up within the current epoch. For each such request $i$, we perform the same insertion procedure as above and accept the insertion only if it yields a positive net profit, i.e., $\theta_i > \Delta C_i$.

\begin{exmp}
We illustrate the insertion process using the example in Figure~\ref{fig:request_insertion}. For simplicity, we consider one dispatched vehicle and one request released at epoch 1 (i.e., request 3). We first identify $\Gamma_k^{e,\mathrm{fix}}$ and $\Gamma_k^{e,\mathrm{adj}}$, represented by the blue and dashed lines, respectively. Upon entering epoch 1, only $\Gamma_k^{e,\mathrm{adj}}$ can be revised. Suppose request 3 is a \Postpone request with predicted prize $\theta_3$. We enumerate all feasible insertion positions in $\Gamma_k^{e,\mathrm{adj}}$ and select the one with the minimum marginal cost $\Delta C_3$ (which also accounts for any additional cost incurred by a recharging visit), shown in orange. Since $\theta_3>\Delta C_3$, request 3 is inserted, resulting in the lower route.
\begin{figure}[ht]
\centering
\includegraphics[width=\textwidth]{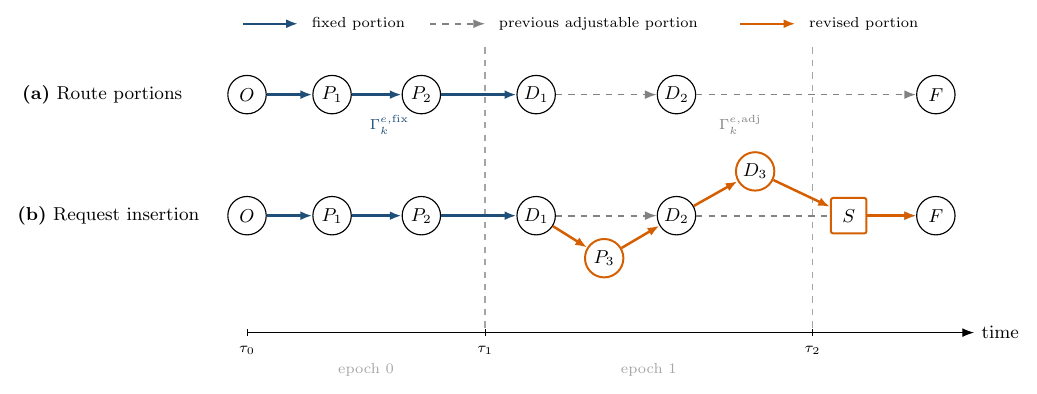}
\caption{\centering Illustrative example of inserting a request}
\label{fig:request_insertion}
\end{figure}

\end{exmp}

It is worth noting that more advanced procedures, such as local search, may improve routing quality within each epoch. However, such improvements do not necessarily lead to better overall performance across epochs and can substantially increase computational time, especially for instances with many requests and vehicles. Therefore, we adopt the greedy insertion procedure described above, which provides a good balance between solution quality and computational efficiency, as demonstrated in the evaluation (Section \ref{subsec:performance_analysis}).


\subsubsection{Feasibility check}
\label{subsubsec:feasibility_check}

We adapt the fragment-based path representation of \citet{su2023deterministic} to perform feasibility checks at each request insertion. Their approach is based on the notion of a \BRFrag, as defined in Definition~\ref{fragment}, and consists of two main steps. First, all feasible \BRFrags are precomputed. Then, since each route can be decomposed into a sequence of such fragments, as illustrated in Figure~\ref{battery-restricted fragment example}, the route-feasibility check for the E-ADARP is transformed into an E-VRP feasibility problem by abstracting the fragments as arcs in an auxiliary graph. The resulting problem is solved using Proposition 1 of \cite{su2023deterministic}.

In addition to the feasibility check, \cite{su2024branch} developed a method to calculate an earliest feasible service schedule that minimizes excess user ride time (Theorem 1 of \cite{su2024branch}). We use this schedule as the planned service schedule of the route, which is then used to determine its fixed and adjustable portions in the next decision epoch.

\begin{defn}[Battery-restricted fragment, \cite{su2023deterministic}] \label{fragment}
Assume that $\frag = (i_1,i_2, \cdots,i_k)$ is a sequence of pickup and drop-off nodes, where the vehicle arrives empty at $i_1$ and leaves empty at $i_k$ and has passenger(s) on board at other nodes. Then, we call ``$\frag$'' a \BRFrag if there exists a feasible route of the form:
$$(o,s_{i_1},\cdots,s_{i_v},\overbrace{i_1,i_2, \cdots,i_k}^{\frag},s_{i_{v+1}},\cdots,s_{i_m},f),$$ where $s_{i_1},\cdots,s_{i_v},s_{i_{v+1}},\cdots,s_{i_m} (v,m \geqslant 0)$ are recharging stations, and $o \in O$, $f \in F$. An illustrative example is shown in Figure \ref{battery-restricted fragment example}, which contains two \BRFrags, i.e., $\frag_1 = \{P_1,P_2,D_1,D_2\}$ and $\frag_2 = \{P_3,D_3\}$.
\end{defn}
\begin{figure}[H]
\centering
\includegraphics[width=14cm]{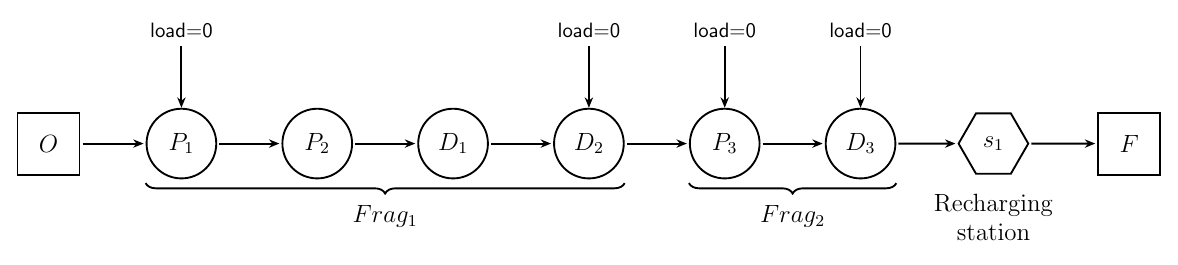}
\caption{\centering Example of \BRFrags}
\label{battery-restricted fragment example}
\end{figure}

To apply the method of \citet{su2023deterministic} to our dynamic setting, we make the following adaptations:

\begin{itemize}

    \item \textbf{Handling fixed route portions:}
    Unlike the static setting, where all requests are known at the beginning of the planning horizon, the dynamic setting may contain route portions that have already been executed or are being executed at the beginning of an epoch. To account for this, we make the following modifications: for each request $r \in \Gamma_k^{e,\mathrm{fix}}$, its service start time is fixed to the value specified in the previous plan. 
    Specifically, if the planned service time of request $r$ is $t_0$, its time window is replaced by $[t_0,t_0]$ before applying the feasibility-checking procedure of \citet{su2023deterministic}.
    This modification ensures that the newly generated route remains identical to the previous plan over its fixed portion.

    \item \textbf{On-demand fragment checking:}
    In previous work \citep{su2023deterministic}, all feasible fragments are precomputed and then directly queried during route-feasibility checking. However, the number of possible fragments grows exponentially with the number of requests. In our dynamic setting, where new requests are released at each epoch, repeating this precomputation can become computationally expensive for large instances. We therefore adopt an on-demand fragment-checking strategy. Rather than enumerating all possible fragments in advance, we only evaluate the fragments that appear in the route currently being checked, using the feasibility rules of \citet{su2023deterministic}. To further reduce repeated computations, we maintain an online cache that stores all previously checked fragments, including both feasible and infeasible ones.

    This strategy substantially reduces the fragment-generation effort in practice, as shown in Table~\ref{tab:a3-36-m27-acceleration}. On an instance with 140 requests, the total evaluation time of the trained policy is reduced to 1.47 seconds, which is approximately $6400\times$ faster than the original implementation that performs full fragment precomputation at each epoch, while producing the same objective value.

\begin{table}[htbp]
    \centering
    \caption{\centering Comparison of the accelerated and original feasibility-checking procedures on an instance with 140 requests}
    \label{tab:a3-36-m27-acceleration}
    \begin{tabular}{lcc}
        \toprule
        Metric & Accelerated version & Original version \\
        \midrule
        Total time (s)           & \textbf{1.47} & 9409.77 \\
        Speedup                  & \textbf{$\approx 6400\times$} & -- \\
        Requests served          & 140/140 & 140/140 \\
        Objective                & 1896.6 & 1896.6 \\
        Feasible fragments stored & 518 & 456,071 \\
        \bottomrule
    \end{tabular}
\end{table}

\end{itemize}

\subsubsection{Recharging station insertion} \label{subsubsec:insertion_recharging_station}
For a candidate route with a newly inserted request, if only battery infeasibility is detected, we try to repair the route by inserting a recharging station. If any other infeasibility is detected, the candidate route is immediately discarded without applying this repair procedure. The insertion of a recharging station is performed as follows:
\begin{itemize}
    \item \textbf{Identify possible positions}: Since a recharging station can only be visited when the vehicle has no passenger, we decompose the candidate route into fragments. The nodes in $\Gamma_k^{e,\mathrm{fix}}$ as well as their service start times remain fixed and cannot be modified. Therefore, a recharging station can only be inserted within $\Gamma_k^{e,\mathrm{adj}}$ and between two consecutive fragments or before the destination depot.
    \item \textbf{Insert a recharging station}: We examine all possible insertion positions in a backward order, starting from the position immediately before the destination depot and moving toward the first possible position. At each position, we select the recharging station that yields the minimum detour and check the feasibility of the insertion following Proposition~1 of \cite{su2023deterministic}. If the resulting route is feasible, the procedure terminates.
\end{itemize}

\subsection{Anticipative solution as training target} \label{subsec:anticipative_target}
The Dyn-EADARP is hard to solve as we only have limited information until the current epoch. However, if we know all requests at the beginning of the horizon, the Dyn-EADARP reduces to a static E-ADARP with release time constraints (hereafter anticipative Dyn-EADARP). By solving the anticipative Dyn-EADARP, we can obtain an anticipative policy based on perfect information. The detailed formulation of the anticipative Dyn-EADARP is shown in \ref{anticipative Dyn-EADARP}. To generate the training dataset, we solve each training instance (introduced in Section \ref{subsec:dynamic_instance_generation}) by iterating the Deterministic Annealing (DA) algorithm of \cite{su2023deterministic} 10,000 times, where we consider the release time constraints in the time window tightening process (as shown in Section \ref{subsec:dynamic_instance_generation}). The obtained anticipative solutions are decomposed into epoch-level state--decision pairs. Specifically, at each decision epoch, we extract the corresponding system state and the request-selection decisions prescribed by the anticipative solution for the currently available requests, with each state--decision pair forming one training sample.


\section{Numerical Results}\label{sec:numerical_results}
This section presents the numerical experiments for evaluating the proposed ML--CO policy. We first describe the generation of the dynamic instances (Section \ref{subsec:dynamic_instance_generation}), the benchmark policies (Section \ref{subsec:benchmarks}), and the features (Section \ref{subsec:feature_engineering}) used by the learning model. We then evaluate all considered policies on the testing instances and compare their performance with an approximate lower bound (Section \ref{subsec:performance_analysis}). Next, we conduct extended experiments to examine the effect of the ML model architecture and the size of the training dataset (Section \ref{subsec:extended_analysis}). Finally, we perform sensitivity analyses (Section \ref{subsec:sensiticvity_analysis}) on several key modeling choices and parameters, including decision frequency, minimum-battery-level ratio $\gamma$, and the weight vector $(w_1,w_2)$ assigned to the two objective components.

For the main experiments, we use a linear ML model, with an epoch duration of 60 minutes and a minimum battery ratio of $\gamma=0.1$. We later vary the epoch duration and $\gamma$ in Sections~\ref{subsubsec:different_epoch_length} and~\ref{subsubsec:different_battery_levels}, respectively, to investigate the effects of decision frequency and the minimum battery requirement on the results. In the weighted-sum objective, the weights of total travel time and total excess user ride time are set to 1 and 0, respectively, such that the main experiments focus on minimizing total travel time. We further consider, in Section \ref{subsubsec:effect_of_weights}, weight combinations of $(0.75,0.25)$ and $(0.5,0.5)$ to examine the trade-off between vehicle travel time and excess user ride time.
The model is trained using the Adam optimizer with a learning rate of $1\times10^{-3}$. We set the perturbation parameter to $\varepsilon=0.5$, use five perturbation samples per instance, and train for at most 30 epochs with an early-stopping patience of 8. All experiments are conducted on a workstation running Ubuntu 24.04.1 with an Intel Core i9-14900K CPU and 16 GiB of memory.

\subsection{Dynamic instance generation} \label{subsec:dynamic_instance_generation}
We construct dynamic instances from the static type-a and type-r E-ADARP instances in \cite{su2023deterministic}, where the largest instance contains 8 vehicles and 96 requests. For each static instance, we divide the planning horizon $[0,T]$ into equal-duration epochs of duration $\eta$. We then make $J$ attempts to generate new requests in each epoch and keep only feasible requests. All generated requests are assigned a release time equal to the beginning of the epoch. Each request is generated independently as follows:
\begin{itemize}
\item \textbf{Sample locations and time windows:} The pickup and drop-off locations are independently sampled from the pickup and drop-off locations of the original static instance. A pair of pickup and drop-off time windows is then randomly sampled from an existing request in the same static instance.

\item \textbf{Adjust time windows:} The sampled time windows are adjusted to match the new pickup and drop-off locations. The pickup time window is shifted according to the change in travel time from the depot to the pickup location. Similarly, the drop-off time window is shifted according to the change in travel time from the pickup to the drop-off location. This adjustment preserves the time-window pattern of the original request while accounting for the new locations.

\item \textbf{Check request feasibility:} Each generated request is checked against its release time, and infeasible requests are discarded. For requests generated in the last epoch, we additionally check that they can be completed within the planning horizon.
\item \textbf{Time window tightening:} After generating the requests, their pickup and drop-off time windows are further
tightened according to the release-time, maximum ride-time, precedence, and planning-horizon constraints.

\end{itemize}

The detailed generation procedure is provided in \ref{app:dynamic_instances}.

For each static instance $I_{\mathrm{static}}$, we consider two sample-size levels $J=\lceil 0.5n\rceil$ and $J=\lceil 0.75n\rceil$, where $n$ is the number of requests in $I_{\mathrm{static}}$. For each pair $(I_{\mathrm{static}},J)$, we generate 10 dynamic instances using different random seeds. Thus, each static instance gives rise to 20 dynamic instances in total. With 24 static instances, this procedure generates 480 dynamic instances. Among these instances, 160 small- to medium-sized dynamic instances (built upon 8 static instances) are used to generate training samples (in total 1280 samples), while the remaining instances are used for testing. In the instance generation, we set epoch duration $\eta$ to 60 minutes and the largest dynamic instance contains 469 requests. To facilitate organization, we refer to each pair $(I_{\mathrm{static}},J)$ as an instance family in the following experiments. For each instance family, we report the average performance of a considered policy.

\subsection{Benchmark policies} \label{subsec:benchmarks}
We evaluate the performance of our proposed policy against a set of benchmarks summarized as follows. All baseline policies are also allowed to modify the adjustable portion of each dispatched vehicle.
\paragraph{Greedy policy}
At each epoch, the greedy policy selects all newly-released requests as \Must requests. The insertion process of selected requests remains the same as in Section \ref{subsec:heuristic_oracle}. 

\paragraph{Lazy policy}
In contrast with the greedy policy, the lazy policy only selects \Must requests which cannot be delayed anymore at each epoch. The same insertion process is applied as in Section \ref{subsec:heuristic_oracle}.

\paragraph{Random policy}
At each epoch, the random policy randomly selects 50\% \Postpone requests as well as all \Must requests of this epoch to serve. The same insertion process is applied as in Section \ref{subsec:heuristic_oracle}.

\paragraph{Rolling horizon policy}
At epoch~$e$, the rolling-horizon policy first samples requests from epoch~$(e+1)$ until the last epoch following the process described in Section~\ref{subsec:dynamic_instance_generation}. Together with the requests in $R_e$, these sampled requests form an anticipative scenario covering epoch~$e$ to the end of the horizon, which is then solved using the DA algorithm with 500 iterations. The resulting anticipative routes are used only to determine which requests should be selected at epoch $e$. Specifically, all \Must requests are selected, while a \Postpone request is selected if and only if, in the anticipative solution, it is assigned to the same route as a \Must request of epoch $e$.

\subsection{Features} \label{subsec:feature_engineering}
The features include request-level features and global information about the epoch. We consider 25 features which can be computed or obtained directly from current state $\state$. Considered features for request $i$ are summarized in Table \ref{tab:features}.
\begin{table}[!ht]
\centering
\adjustbox{max width=\textwidth}{\begin{threeparttable}
\caption{\centering Summary of features for a request $i$}
\scriptsize
\label{tab:features}
\begin{tabular}{lll}
\toprule
Index & Feature & Description \\
\hline
1--2 & Pickup coordinates & Pickup $x-$ and $y-$coordinate \\
3--4 & Drop-off coordinates & Drop-off $x-$ and $y-$coordinate \\
5 & Demand & $q_i$ \\
6 & Pickup service time & $s_i$ \\
7 & Drop-off service time & $s_{n+i}$ \\
8--9 & Pickup time window & $e_i,l_i$ \\
10--11 & Drop-off time window & $e_{n+i},l_{n+i}$ \\
12 & Depot-to-pickup time & $\min_{o\in O}t_{o,i}$ \\
13 & Pickup-to-dropoff time & $t_{i,n+i}$ \\
14 & Dropoff-to-depot time & $\min_{f\in F}t_{n+i,f}$ \\
15 & Pickup time window width & $l_{i} - e_{i}$ \\
16 & Time window tightness ratio & $t_{o,i} /l_{i}$ \\
17 & Battery consumption & $\min_{o\in O}b_{o,i} + b_{i,n+i} + \min_{f\in F}b_{n+i,f}$ \\
18 & Nearest charging station from pickup & $\min_{s \in S} t_{i,s}$ \\
19 & Nearest charging station from drop-off & $\min_{s \in S} t_{n+i,s}$\\
20 & Ride time slack & $m_i - t_{i,n+i}$ \\
21 & Remaining time ratio & $(l_{i} - \tau_e) / T$ \\
22 & Must-dispatch indicator & see Definition \ref{must_dispatch_request}\\
23 & Epoch progress & $e /E$ \\
24 & Epoch start time & $\tau_e$ \\
25 & Epoch end time & $\tau_{e+1}$ \\
\bottomrule
\end{tabular}
\end{threeparttable}}
\end{table}

\subsection{Performance Analysis}\label{subsec:performance_analysis}
We report the aggregated results of the different policies in Table \ref{tab:aggregated_results} and Figures \ref{fig:relative_to_anticipative} and \ref{fig:average_objective}. In Table \ref{tab:aggregated_results}, the column named \textit{Avg. obj} reports the average objective value obtained by each method, while \textit{Avg. evaluation time} gives the average computational time required to solve one dynamic instance. The last column \textit{ML--CO imp} reports the cost saving achieved by ML--CO relative to the other baselines. 
For better illustration and to assess the potential optimality gap, we compute an \lb by solving the anticipative problem with the DA algorithm for 5,000 iterations, using the ML--CO solution as a warm start. For each instance, we perform three independent DA runs and use their average objective value as the anticipative reference value (hereafter Ref).
The mean gap is computed as the arithmetic mean of the relative gaps on 320 testing instances to the Ref and is reported in the third column (\textit{Avg. gap to Ref}).

From Table \ref{tab:aggregated_results} and Figure \ref{fig:relative_to_anticipative}, ML--CO achieves the best performance among all benchmark policies, with an average objective value of 2378.6 and an average gap of only 4.8\% to the Ref. Among the four benchmarks introduced in Section~\ref{subsec:benchmarks}, the lazy policy performs best which suggests that delaying dispatching decisions is actually an effective policy for the Dyn-EADARP. However, ML--CO further reduces the cost by 8.7\% relative to the lazy policy. It indicates that ML--CO captures useful decision patterns (e.g., when a request should be dispatched earlier) by learning from anticipative targets. This finding is consistent with \cite{baty2024combinatorial} and extends their observation to the more complex Dyn-EADARP setting.

Figure \ref{fig:average_objective} further shows the average performance of different policies on each instance family. For all the instance families, the ML--CO policy outperforms all benchmark policies. For detailed results on each instance family, readers can refer to \ref{app:detailed_results}.

\begin{table}[!ht]
\centering
\begin{threeparttable}
\caption{\centering Aggregated performance over all 320 testing instances.}
\label{tab:aggregated_results}
\begin{tabular}{lcccc}
\toprule
Method & Avg. obj & Avg. evaluation time (s) & Avg. gap to Ref (\%) & ML--CO imp (\%) \\
\midrule
ML--CO & \textbf{2378.6} & 6.3 & \textbf{+4.8} & --- \\
Rolling horizon & 2741.5 & 529.6 & +20.1 & +13.2 \\
Greedy & 2776.0 & 15.0 & +21.6 & +14.3 \\
Random & 2737.0 & 11.2 & +20.0 & +13.1 \\
Lazy & 2605.5 & 2.6 & +14.6 & +8.7 \\
\bottomrule
\end{tabular}
\end{threeparttable}
\end{table}

\begin{figure}[!ht]
    \centering
    \subfigure[Aggregated performance to Ref]{
    \includegraphics[width=13cm]{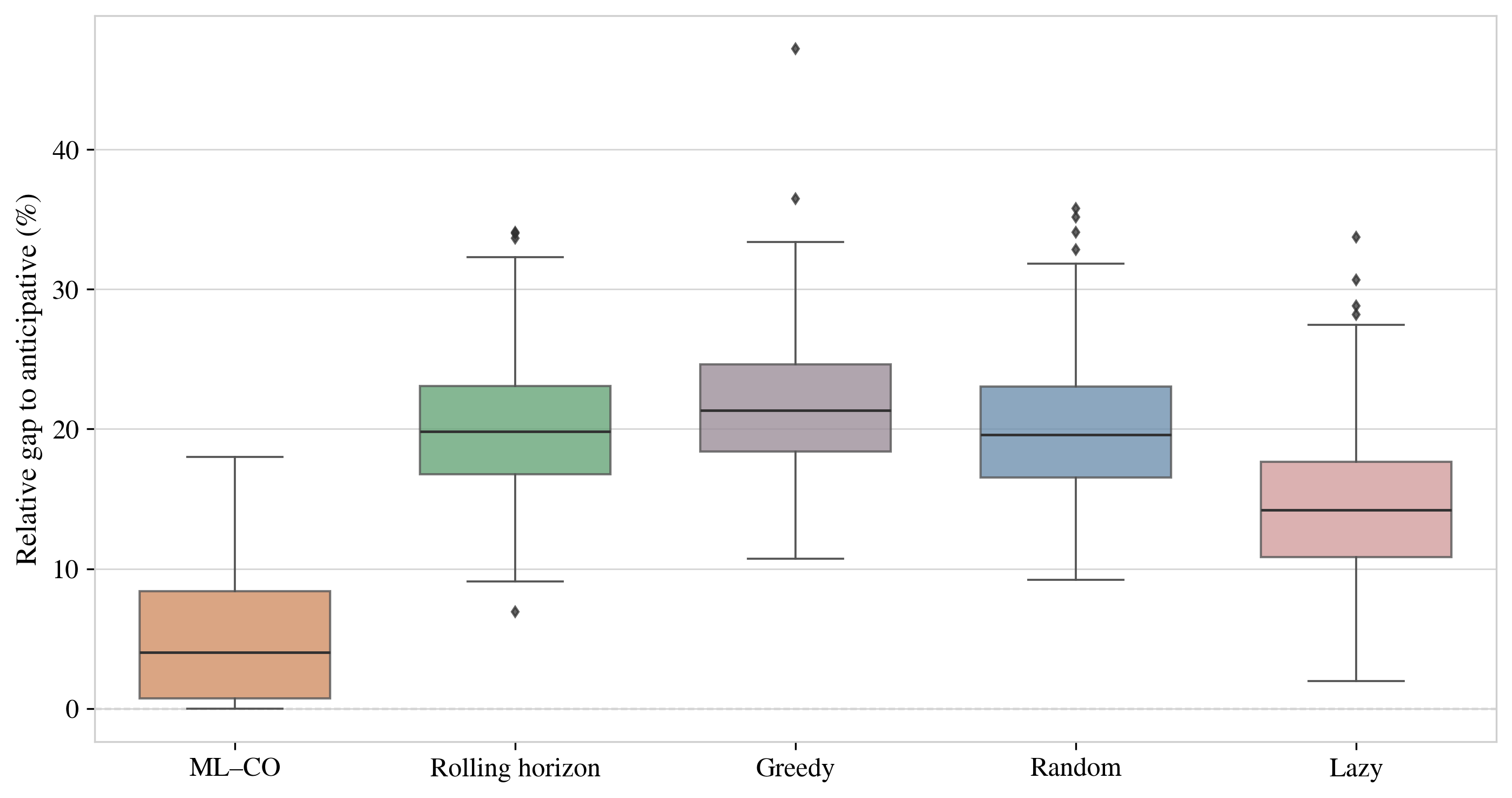}
    \label{fig:relative_to_anticipative}
    }
    \vfill
    \subfigure[Average objective gap to Ref for each instance family]{
    \includegraphics[width=\textwidth]{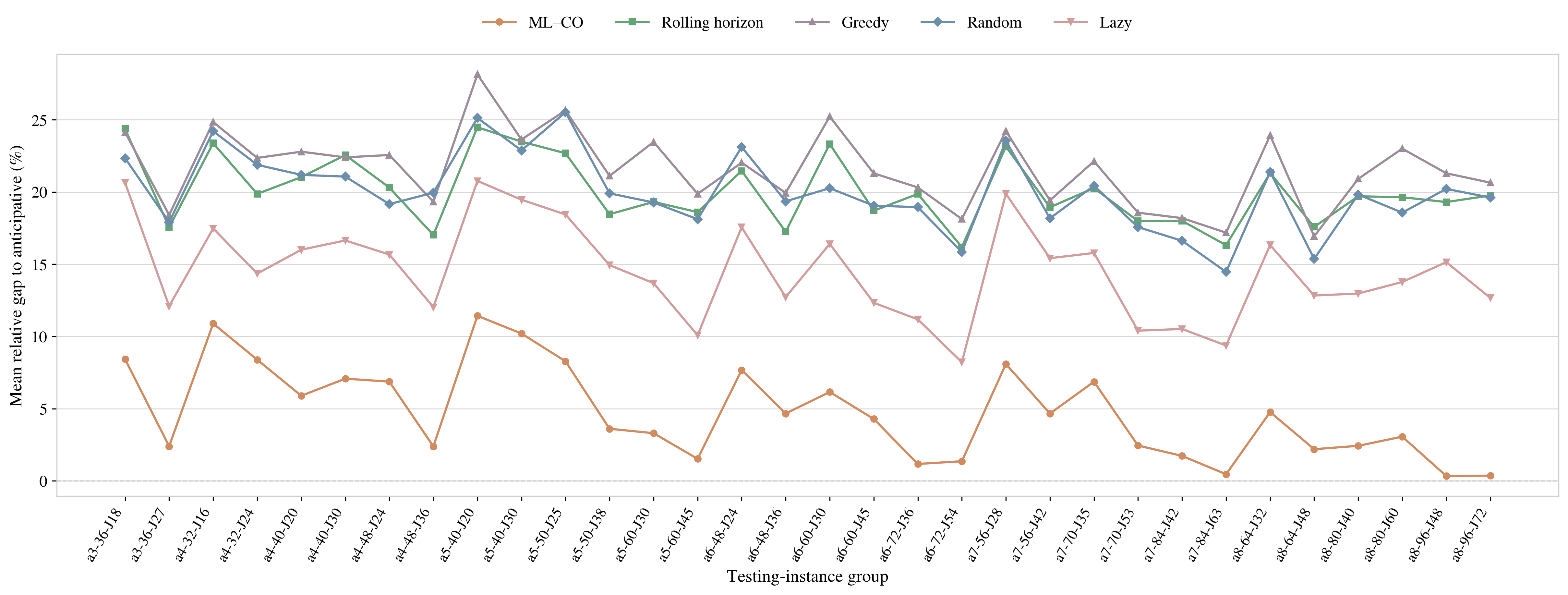}
    \label{fig:average_objective}
    }
    \caption{\centering The relative performance among different policies to Ref}
\end{figure}

\subsubsection{Case study}
We use a case study to better understand two aspects of the ML--CO policy: (1) how it achieves its performance advantage over the benchmark policies, and (2) why imitation of an anticipative policy works well in practice. 

\paragraph{\textbf{Timing of selection}}
 The results in Table \ref{tab:request-selection-timing} show clear differences in request-selection timing across policies. Except for the lazy policy, all other baseline policies tend to select requests relatively early. The greedy policy is the extreme case, with all requests selected immediately upon release. 

This differs from \cite{baty2024combinatorial}, where rolling horizon performs well while the lazy policy performs poorly. The main reason is the problem structure. In our Dyn-EADARP, postponing a request can be beneficial because it may be grouped with future requests, creating better ride-sharing opportunities and reducing routing costs. In contrast, serving requests earlier is more valuable in the dynamic VRP studied by \cite{baty2024combinatorial}, as it reduces the risk of time-window violations. Under the rolling-horizon policy, an available request may be selected early simply because it shares an anticipative route with a \Must request in the current epoch. Once selected, part of its planned route may become fixed in the next epoch, limiting future adjustments and reducing ride-sharing opportunities, leading to increased routing costs.

In contrast to the greedy policy, the lazy policy works better than other baselines as it postpones requests as much as possible until they must be served. Our ML--CO policy also postpones most requests but serves 10.5\% of requests earlier than the lazy policy. This selection strategy leads to a 8.7\% reduction in the mean objective, suggesting that ML--CO can identify requests for which earlier service creates better routing opportunities.


\begin{table}[!ht]
\centering
\begin{threeparttable}
\caption{\centering Comparison of request selection timing across policies}
\label{tab:request-selection-timing}
\renewcommand{\arraystretch}{1.10}
\small
\begin{tabular}{@{}lccccc@{}}
\toprule
\textbf{Metric}
& \textbf{Greedy}
& \textbf{Random}
& \textbf{Rolling horizon}
& \textbf{Lazy}
& \textbf{ML--CO} \\
\midrule

Avg. wait epochs
& 0.0
& 0.7
& 0.3
& 2.8
& 2.7 \\

Selected upon release
& 100.0\%
& 59.5\%
& 71.5\%
& 19.1\%
& 21.7\% \\

Selected in final epoch
& 3.9\%
& 7.7\%
& 5.4\%
& 33.1\%
& 30.8\% \\

Selected earlier than Lazy
& 80.9\%
& 65.6\%
& 73.8\%
& --
& 10.5\% \\

\bottomrule
\end{tabular}

\begin{tablenotes}
\footnotesize
\item ``Avg. wait epochs'' denotes the average number of epochs
between request release and selection. The last row reports the percentage
of requests selected earlier than under Lazy.
\end{tablenotes}
\end{threeparttable}
\end{table}

\paragraph{\textbf{Detailed example on a4-32-J16-rep07}}
We analyze the medium-sized testing instance a4-32-J16-rep07. For this instance, our ML--CO policy achieves an objective value of 851.98, whereas the Lazy policy obtains 914.68, corresponding to a 7.36\% higher cost than ML--CO.
For comparison, we also report the anticipative solution. The objective value of the obtained anticipative solution is 815.129. From the obtained solutions, all considered policies finally use the same number of vehicles, therefore isolating the effect of routing and scheduling.

Comparing with the solution obtained by the lazy policy, we find that ML--CO policy changes the selection timing of only seven requests, as summarized in Table \ref{tab:advanced-requests-mlco}.  
We also summarize the epoch at which the anticipative solution selects these requests. Interestingly, six out of seven requests are selected in exactly the same epochs in the anticipative and ML-CO solutions. This suggests that ML--CO learns some of the key selection patterns of the anticipative solution. Moreover, ML--CO achieves a 7.36\% cost reduction compared with the lazy policy. For example, at epoch 4, both ML--CO and the anticipative policy select requests 39 and 50 and assign them to a dispatched vehicle, enabling a compact ride-sharing route. Under the lazy policy, both requests are postponed until epoch 5 and are inserted into two different routes, resulting in higher travel time. This example suggests that ML--CO is able to identify requests for which postponement has a high opportunity cost. By serving such requests just one epoch earlier, ML--CO creates better ride-sharing opportunities and improves overall routing efficiency. It therefore suggests an empirical insight that the ML--CO policy actually learns which requests should no longer be postponed (even though they are postponable).

\begin{table}[htbp]
\centering
\begin{threeparttable}
\caption{\centering Comparison of selected epochs for requests advanced by ML--CO for the instance a4-32-J16-rep07}
\label{tab:advanced-requests-mlco}
\renewcommand{\arraystretch}{1.05}
\small
\begin{tabular}{@{}lccc@{}}
\toprule
\textbf{Requests}
& \textbf{Anticipative selection}
& \textbf{ML--CO selection}
& \textbf{Lazy selection} \\
\midrule
$\{6,12,13\}$ & Epoch 0 & Epoch 0 & Epoch 1 \\
$\{30\}$      & Epoch 2 & Epoch 2 & Epoch 3 \\
$\{41\}$      & Epoch 4 & Epoch 3 & Epoch 4 \\
$\{39,50\}$   & Epoch 4 & Epoch 4 & Epoch 5 \\
\bottomrule
\end{tabular}
\end{threeparttable}
\end{table}

\subsection{Extended Analysis} \label{subsec:extended_analysis}
In this section, we analyze how different training settings affect the performance of the ML--CO policy. In Section~\ref{subsubsec:different_statistical_models}, we investigate the effect of using more expressive statistical models in the ML layer, including MLP and GNN models. In Section~\ref{subsubsec:different_training_instances}, we vary the size of the training set to evaluate the generalization ability of the learned policy.

\subsubsection{Different statistical models} \label{subsubsec:different_statistical_models}
We also evaluate the performance of ML--CO policy integrating other statistical models (i.e., MLP and GNN). The input of all considered statistical models is the feature matrix $\psi(\state)$ corresponding to requests in $R_e$, with request $i \in R_e$ associated with a feature vector $\psi(i,\state)$. In \textit{linear model} $\varphi_w = (w^\top\psi(i,\state))_i$, each request-level feature vector $\psi(i,\state)$ is converted to its prize $\theta_i$ by the same linear mapping. In \textit{MLP}, each request-level feature vector $\psi(i,\state)$ is passed through two fully connected layers of 10 hidden units with ReLU activations, followed by a scalar output layer, yielding the request prize $\theta_i$. All requests share the same network parameters, while their prizes are computed independently.
In GNN, we construct an epoch-specific directed graph in which each node represents a request in the current request set $R_e$. The graph therefore changes dynamically across epochs. 
We construct a sparse graph by connecting only requests that can potentially be served consecutively.
Specifically, a directed edge $(i,j)$ for two requests $i$ and $j$ is included only if the partial path $(i,n+i,j)$ is feasible. 
For each feasible edge $(i,j)$, we assign the weight
\[
a_{ij}=\exp\left(-\frac{t_{n+i,j}}{\tau}\right),
\]
where $t_{n+i,j}$ is the travel time from the drop-off of request $i$ to the pickup of request $j$, and $\tau$ is the average such travel time among requests in the current epoch. 
We then apply two message-passing layers, allowing each request to aggregate information from its neighboring compatible requests. Finally, the resulting node representation is passed through an MLP to obtain the request prize. Unlike the linear model and MLP, which predict each request prize independently, the GNN produces prizes that incorporate information from neighboring compatible requests.

As shown in Table \ref{tab:expanded-model-architecture-aggregated}, the GNN-based ML–CO policy achieves the best overall performance, reducing the mean gap to the Ref from 4.8\% with the linear model to 4.6\%.
However, the differences among the three statistical models remain small. One reason might be that the structural information among requests is considered in the decision process of the CO oracle (Section \ref{subsec:heuristic_oracle}). The GNN additionally incorporates such interactions when predicting the prizes, leading to further but small improvements.

\begin{table}[!ht]
\centering
\begin{threeparttable}
\caption{\centering Aggregated performance of the ML--CO statistical models over all 320 testing instances.}
\label{tab:expanded-model-architecture-aggregated}
\begin{tabular}{ccccc}
\toprule
Statistical model & Avg. objective & Avg. gap to Ref (\%) & Training time (s) & Avg. evaluation time (s) \\
\midrule
Linear & 2378.6 & +4.8 & 2092.3 & 6.3 \\
MLP & 2378.6 & +4.9 & 4084.8 & 7.2 \\
Sparse GNN & \textbf{2374.1} & \textbf{+4.6} & 3505.3$^{a}$ & 7.4 \\
\bottomrule
\end{tabular}
\begin{tablenotes}
\scriptsize
\item a: The GNN has a shorter training time than the MLP because training terminates earlier with early-stop patience parameters.   
\end{tablenotes}
\end{threeparttable}
\end{table}

\subsubsection{Different number of training instances}
\label{subsubsec:different_training_instances}
We also analyze the impact of the number of training instances on the generalization performance of the ML--CO policy. We consider four training set sizes: 40, 80, 120, and 160 instances. The full 160-instance training set is generated from eight static instances. The 40-, 80-, and 120-instance training sets are constructed incrementally from the full training set using stratified sampling \citep{cochran1977sampling}, ensuring that all eight static instances and both $J$-levels are included at each training-set size. Starting from 40 instances, we successively add 40 instances following the same sampling scheme to obtain the 80- and 120-instance training sets, while the 160-instance set contains all available training instances.

From Table \ref{tab:training-size-aggregated}, we observe that the performance of the ML–CO policy is stable across the considered training-set sizes.
Increasing the size of the training set from 40 to 160 instances only slightly improves the mean objective and the gap to the Ref. This suggests that the model already learns useful decision patterns from a relatively small training set.

\begin{table}[!ht]
\centering
\begin{threeparttable}
\caption{\centering Effect of the number of training instances on the linear ML--CO model over all 320 testing instances.}
\label{tab:training-size-aggregated}
\begin{tabular}{cccc}
\toprule
Training-set sizes & Avg. objective & Avg. gap to Ref (\%) & Training time (s) \\
\midrule
40 & 2380.5 & +4.9 & 660.1\\
80 & 2380.7 & +4.9 & 893.2\\
120 & 2378.9 & +4.8 & 1338.3\\
160 & \textbf{2378.6} & +4.8 & 2092.3\\
\bottomrule
\end{tabular}
\end{threeparttable}
\end{table}


\subsection{Sensitivity studies} \label{subsec:sensiticvity_analysis}
To further examine the impact of key modeling and operational settings, we conduct a series of sensitivity studies. Specifically, we investigate varying the routing flexibility (i.e., allowing revisions to previously planned routes or not) in Section \ref{subsubsec:effec_of_adjusting}, varying the decision frequency (i.e., different epoch durations) in Section \ref{subsubsec:different_epoch_length}, varying minimum-battery-level restrictions (i.e., different $\gamma$ values) in Section  \ref{subsubsec:different_battery_levels}, and varying weight vectors in Section \ref{subsubsec:effect_of_weights}.
These analyses provide additional insights into how different operational settings affect the performance of the proposed approach.

\subsubsection{Effect of different levels of routing flexibility} \label{subsubsec:effec_of_adjusting}
To evaluate the effect of routing flexibility, we compare three ML--CO variants. In the \textit{adjustable} setting, the adjustable portion $\Gamma_k^{e,\mathrm{adj}}$ can be revised as new information becomes available, and newly selected requests can be incorporated into routes of already dispatched vehicles. In the \textit{semi-lock} setting, we no longer distinguish fixed or adjustable portions and we fix all previously-made routing decisions (including their ordering and schedules). Newly selected requests are allowed to be inserted into existing routes, but only after all previously planned nodes.
Finally, in the \textit{full-lock} setting, previously planned routes remain fixed, and newly selected requests must be served by newly dispatched vehicles. Ride-sharing is still allowed among requests assigned to the same newly dispatched vehicle. 
The last variant is consistent with the setting considered by \cite{baty2024combinatorial}. 
Table \ref{tab:non-revisable-mlco-aggregated} reports the average objective value, number of dispatched vehicles, and relative gap to the Ref over the 320 testing instances.

\begin{table}[!ht]
\centering
\begin{threeparttable}
\caption{Aggregated performance of ML--CO under different levels of routing flexibility.}
\label{tab:non-revisable-mlco-aggregated}
\begin{tabular}{cccc}
\toprule
Method & Avg. objective & Avg. dispatched vehicles & Avg. gap to Ref (\%) \\
\midrule
Adjustable ML--CO & \textbf{2378.6} & 18.7 & \textbf{+4.8} \\
Semi-lock ML--CO & 2620.9 & 17.8 & +14.9 \\
Full-lock ML--CO & 3097.4 & 65.3 & +36.4 \\
\bottomrule
\end{tabular}
\end{threeparttable}
\end{table}

Allowing greater routing flexibility substantially improves performance. The mean gap to the Ref decreases from 36.4\% under the full-lock setting to 14.9\% under the semi-lock setting, and further to 4.8\% under the adjustable setting. The large fleet size in the full-lock setting (65.3 vehicles on average) mainly results from serving selected requests at each epoch by dispatching new vehicles.
Allowing these requests to be inserted into existing routes, as in the semi-lock setting, reduces the average number of dispatched vehicles to 17.8.

More importantly, allowing the adjustable portions of existing routes to be revised further reduces the mean objective from 2620.9 to 2378.6. This improvement is achieved despite a similar number of dispatched vehicles (17.8 versus 18.7), indicating that the benefit mainly comes from better coordination of planned but unexecuted routing decisions rather than reduced vehicle numbers. Figure \ref{fig:revisible_v.s_non_revisible} shows that this performance ordering is consistent across all testing-instance groups. These results highlight both the value of reusing dispatched vehicles and the additional benefit of revising unexecuted routing decisions.

\begin{figure}[!ht]
\centering
\includegraphics[width=0.9\textwidth]{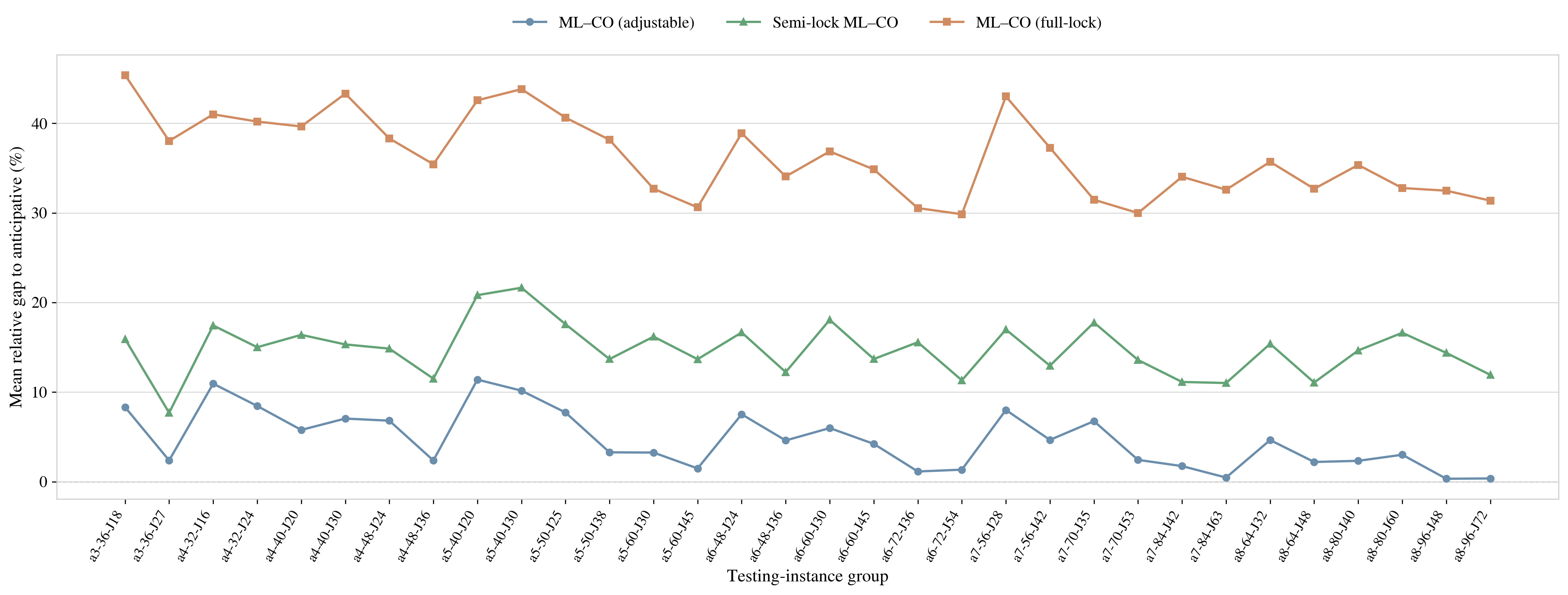}
\caption{\centering Relative gap to Ref under different levels of routing flexibility.}
\label{fig:revisible_v.s_non_revisible}
\end{figure}

\subsubsection{Effect of different epoch durations}
\label{subsubsec:different_epoch_length}
Since the default epoch duration is set to 60 minutes, decisions are updated only once per hour. We therefore further investigate the impact of epoch duration on the solution quality of the Dyn-EADARP. In addition to the default setting $\eta=60$, we consider three shorter epoch durations, $\eta \in \{10,20,30\}$, to evaluate the effect of more frequent decision-making. To isolate this effect, we maintain the original request release times and only introduce additional decision epochs to shorten the epoch duration. For example, when $\eta=30$, the original epoch $[0,60)$ is divided into $[0,30)$ and $[30,60)$, while all requests originally released at time 0 remain released at time 0. 

For each shorter epoch duration, we train a separate model and evaluate it on testing instances constructed under the corresponding fixed-release setting. For each value of $\eta$, we use 40 training instances to generate the training samples and evaluate the trained model on 32 testing instances. The training instances are selected following the same procedure as in Section \ref{subsubsec:different_training_instances}. The testing instances are constructed from 32 original 60-minute-epoch instances, corresponding to the first replication of each instance family $(I_{\mathrm{static}},J)$.
Detailed procedures for instance generation, training, and evaluation are provided in \ref{app:epoch_instance_generation}. We summarize our results in Table \ref{tab:epoch_length_results}.



\begin{table}[htbp]
    \centering
    \begin{threeparttable}
        \caption{\centering Effect of epoch duration on the ML--CO policy.}
        \label{tab:epoch_length_results}
        \small
        \setlength{\tabcolsep}{5pt}
        \begin{tabular}{cccccc}
            \toprule
            \multirow{2}{*}{\shortstack{epoch duration (min)}}
            & \multicolumn{3}{c}{Objective}
            & \multicolumn{2}{c}{Statistics of ML--CO} \\
            \cmidrule(lr){2-4}
            \cmidrule(lr){5-6}
            &\multirow{1}{*}{ ML--CO obj.}
            & \multirow{1}{*}{Lazy obj.}
            & \multirow{1}{*}{\shortstack{Gain over Lazy (\%)}}
            & \multirow{1}{*}{\shortstack{Avg. vehicles}}
            & Avg. cost per vehicle\\
            \midrule
            10 & 2626.0 & 2839.6 &  6.8 & 27.3 &  96.3 \\
            20 & 2507.1 & 2776.7 &  9.1 & 23.9 & 104.7 \\
            30 & 2414.1 & 2734.4 & 11.4 & 21.3 & 113.4 \\
            60 & 2459.7 & 2681.3 &  8.0 & 19.0 & 129.7 \\
            \bottomrule
        \end{tabular}
    \end{threeparttable}
\end{table}

From Table \ref{tab:epoch_length_results}, we observe that the ML--CO policy consistently outperforms the best baseline policy, Lazy, with gains ranging from 6.8\% to 11.4\%. These results also reveal an important practical implication regarding the choice of epoch duration: more frequent decision making does not necessarily lead to better performance. In general, shorter epoch durations result in higher costs. One possible reason is that, with more frequent decision epochs, the policy tends to dispatch more vehicles, as shown in ``Avg. vehicles'' column, which increases the total cost. However, the 30-minute setting is an exception to this general trend. Compared with the 60-minute setting, it uses 12.1\% more vehicles on average (21.3 versus 19.0), while reducing the average cost per vehicle by 12.6\% (113.4 versus 129.7). As a result, the 30-minute setting achieves a lower mean objective (2414.1 versus 2459.7) despite using more vehicles. This suggests that neither a shorter nor a longer epoch duration is necessarily better. Instead, an appropriate epoch duration should balance the number of dispatched vehicles with the routing efficiency of each vehicle.

\subsubsection{Effect of different minimum-battery-level restrictions}
\label{subsubsec:different_battery_levels}
We further investigate the effect of the minimum battery-level requirement. In addition to the baseline setting of $\gamma=0.1$, we consider two higher minimum battery ratios at the destination depot, $\gamma=0.4$ and $0.7$, following \cite{bongiovanni2019electric}. A higher $\gamma$ requires vehicles to have more battery upon returning to the destination depot, which may lead to more frequent visits to recharging stations.

For each value of $\gamma$, we use the same training instance set and generate training samples under the corresponding battery requirement. A separate ML--CO model is then trained for each $\gamma$ setting. For testing, we use the same 32 instances across all $\gamma$ values, corresponding to the first replication from each of the 32 instance families defined by $(I_{\mathrm{static}},J)$. Each trained model is evaluated on these testing instances under its corresponding value of $\gamma$. The lazy benchmark policy is evaluated under the same settings and on the same testing instances. The aggregated results are reported below.

\begin{table}[htbp]
    \centering
    \begin{threeparttable}
        \caption{\centering Performance of ML--CO and Lazy on 32 testing instances under different values of $\gamma$.}
        \label{tab:gamma_sensitivity}
        \small
        \setlength{\tabcolsep}{5pt}
        \begin{tabular}{cccccc}
            \toprule
            \multirow{3}{*}{\shortstack{Minimum battery \\level ratio ($\gamma$)}}
            & \multicolumn{3}{c}{Objective}
            & \multicolumn{2}{c}{Statistics of ML--CO} \\
            \cmidrule(lr){2-4}
            \cmidrule(lr){5-6}
            & \multirow{1}{*}{ML--CO obj.}
            & \multirow{1}{*}{Lazy obj.}
            & \multirow{1}{*}{\shortstack{Gain over Lazy (\%)}}
            & \multirow{1}{*}{\shortstack{Avg. vehicles}}
            & \multirow{1}{*}{\shortstack{Avg. charging-station visits}} \\
            \midrule
            0.1 & 2459.7 & 2681.3 & 8.0 & 19.0 & 0.9 \\
            0.4 & 2470.6 & 2738.3 & 9.8 & 19.5 & 5.5 \\
            0.7 & 2576.0 & 2840.0 & 9.3 & 26.8 & 19.7 \\
            \bottomrule
        \end{tabular}
    \end{threeparttable}
\end{table}

From Table \ref{tab:gamma_sensitivity}, we can see that a higher value of $\gamma$ leads to higher routing costs for both ML--CO and the Lazy policy. For ML--CO, the average number of charging-station visits increases from 0.9 at $\gamma=0.1$ to 5.5 at $\gamma=0.4$ and 19.7 at $\gamma=0.7$. The number of dispatched vehicles changes only slightly when $\gamma$ increases from 0.1 to 0.4, but increases substantially to 26.8 at $\gamma=0.7$. This shows that a moderate battery requirement can mainly be handled by additional charging time at visited charging stations. However, when $\gamma=0.7$, vehicles need to visit charging stations more often and more vehicles need to be dispatched. As a result, the objective value increases with $\gamma$. The increase is small from $\gamma=0.1$ to $0.4$, but becomes much larger at $\gamma=0.7$.

We also observe that ML--CO consistently outperforms the Lazy policy under all three settings, with gains ranging from 8.0\% to 9.8\%. This shows that the learned request-selection policy remains effective even when the battery requirement becomes stricter and more charging is needed.

\subsubsection{Effect of different weight vectors} \label{subsubsec:effect_of_weights}
To examine how the objective weights affect the resulting solutions, we consider different weight vectors $(w_1,w_2)$ for total travel time and total excess user ride time. In addition to the default setting $(1,0)$, we evaluate $(0.75,0.25)$ and $(0.5,0.5)$ to investigate the trade-off between operational efficiency and user ride quality. Similar to Sections \ref{subsubsec:different_epoch_length} and \ref{subsubsec:different_battery_levels}, we use the same set of training instances and generate training samples separately for each $(w_1,w_2)$ setting. We then train a separate ML--CO model for each weight vector. For testing, we use the same 32 instances across all settings, corresponding to the first replication from each instance family. Each trained model is evaluated under its corresponding weight vector. The Lazy benchmark is evaluated under the same settings and on the same testing instances. The aggregated results are reported below.

\begin{table}[htbp]
    \centering
    \begin{threeparttable}
        \caption{\centering Performance under different objective weight vectors.}
        \label{tab:weight_sensitivity}
        \small
        \setlength{\tabcolsep}{6pt}
        \begin{tabular}{cccccccc}
            \toprule
            & \multicolumn{3}{c}{Objective}
            & \multicolumn{4}{c}{Statistics of ML--CO} \\
            \cmidrule(lr){2-4}
            \cmidrule(lr){5-8}
            $(w_1,w_2)$
            & ML--CO
            & Lazy
            & \shortstack{Gap with\\Lazy}
            & \shortstack{Travel\\time}
            & \shortstack{Excess user\\ride time}
            & \shortstack{Avg.\\vehicles}
            & \shortstack{Avg. fragment\\length} \\
            \midrule
            $(1,0)$
            & 2455.4
            & 2681.3
            & 8.4\%
            & 2455.4
            & 1492.0
            & 18.9
            & 4.0 \\

            $(0.75,0.25)$
            & 2046.7
            & 2188.8
            & 6.5\%
            & 2427.7
            & 904.0
            & 20.5
            & 3.1 \\

            $(0.5,0.5)$
            & 1519.2
            & 1586.1
            & 4.2\%
            & 2592.2
            & 446.3
            & 22.7
            & 2.5 \\
            \bottomrule
        \end{tabular}
    \end{threeparttable}
\end{table}

As expected, increasing the weight on the total excess user ride time leads to a clear reduction in this component. At the same time, the performance gap between ML--CO and Lazy decreases from 8.4\% to 4.2\%. One possible explanation is that a higher weight on the total excess ride time encourages more direct pickup-and-delivery service and reduces ride-sharing opportunities, leaving less room for ML--CO to benefit from strategic request selection and postponement. 
This is supported by the average fragment length, which decreases from 4.0 under $(1,0)$ to 2.5 under $(0.5,0.5)$. Meanwhile, the average number of dispatched vehicles increases from 18.9 to 22.7, which further indicates a less consolidated service.

Interestingly, a moderate weight on excess user ride time, i.e., $(0.75,0.25)$, reduces both excess ride time and total travel time compared with $(1,0)$. This suggests that some consideration of user ride quality can improve route organization by avoiding inefficient detours. However, when this weight becomes larger, shorter fragments and more dispatched vehicles indicate reduced ride-sharing, causing total travel time to increase. Overall, the results show a clear trade-off between ride-sharing efficiency and user ride quality.

\section{Conclusion} \label{sec:conclusion}
In this work, we introduce a new variant of the dynamic E-ADARP based on decision epochs. In the Dyn-EADARP, incoming requests are collected and processed at a sequence of decision epochs, thereby decoupling request arrivals from dispatching decisions.
At each epoch, the service provider decides which available requests to dispatch and which to postpone, while jointly determining vehicle routes, schedules, and charging decisions. The unexecuted portions of previously planned routes can also be revised as new information becomes available. By making the timing of request dispatch an explicit decision, this setting allows the service provider to exploit future ride-sharing opportunities while adapting existing plans to newly arrived requests.

To solve the Dyn-EADARP, we develop a ML--CO policy within the COAML framework integrating a statistical model and a CO layer. The statistical model predicts a prize for each currently available request, while the downstream optimization layer solves a prize-collecting E-ADARP to jointly determine request selection and vehicle plans. To make this optimization layer computationally practical, we design a problem-specific heuristic that accounts for pickup--drop-off pairing, ride-sharing, vehicle charging, and revision of previously planned routes. The statistical model is trained with the objective of improving the quality of decisions produced by the optimization layer.

Computational experiments on 320 testing instances demonstrate the effectiveness of the proposed policy. The ML--CO policy improves the objective value by 8.7\% compared with the best benchmark policy and achieves an average gap of 4.8\% to the approximate anticipative lower bound. It also solves instances with nearly 500 requests efficiently, requiring about 6 seconds on average. The results provide several managerial insights. First, serving requests immediately is not always beneficial, as strategically postponing some requests can create better ride-sharing opportunities and reduce routing costs. Second, allowing the unexecuted portions of existing plans to be revised preserves operational flexibility and improves solution quality. Third, more frequent decision making does not necessarily lead to better performance, highlighting the importance of choosing an appropriate decision frequency. Additional experiments examine the effects of statistical-model architecture, training-set size, routing flexibility, decision frequency, battery requirements, and objective weights, and show that the main conclusions are robust across a broad range of settings.


Several directions could be considered for future research. First, the current framework can be extended to settings with more realistic request-arrival processes and heterogeneous service requirements. Second, future research can also consider a stochastic Dyn-EADARP with uncertain travel times or energy consumption.
Finally, it would be interesting to extend the proposed decision-focused framework to other dynamic ride-sharing and demand-responsive transportation problems in which operational decisions must be repeatedly updated as new information becomes available.

\bibliography{mybib.bib} 

\newpage
\appendix

\section{Fenchel-Young loss}
\label{app:FY_loss}
Recall that the objective of the CO layer can be written as
\begin{equation}
G_e(\boldsymbol{y}_e;\boldsymbol{\theta}_e)
=
\boldsymbol{\theta}_e^\top \boldsymbol{y}_e
-
C_e(\boldsymbol{y}_e),
\end{equation}
where $C_e(\boldsymbol{y}_e)$ is the routing cost and does not depend on $\boldsymbol{\theta}_e$. Therefore, we have
\begin{equation}
\label{eq:gradient_G_e}
\nabla_{\boldsymbol{\theta}_e}G_e(\boldsymbol{y}_e;\boldsymbol{\theta}_e) = \boldsymbol{y}_e
\end{equation}

Then, we define the perturbed function as

\begin{equation}
F_{\epsilon}(\boldsymbol{\theta}_e)
=
\mathbb{E}_{Z}
\left[
\max_{\boldsymbol{y}_e\in\mathcal{Y}(x_e)}
G_e
\left(
\boldsymbol{y}_e;
\boldsymbol{\theta}_e+\epsilon Z
\right)
\right].
\end{equation}
As $G_e(\boldsymbol{y}_e;\boldsymbol{\theta}_e)$ is affine in $\boldsymbol{\theta}_e$, $F_{\epsilon}(\boldsymbol{\theta}_e)$ is a convex function with $\boldsymbol{\theta}_e$. Following the perturbed-optimizer results of \cite{berthet2020learning}, we have the gradient of $F_{\epsilon}(\boldsymbol{\theta}_e)$ equals to:
\begin{equation}
\label{eq:gradient_F}
\nabla_{\boldsymbol{\theta}_e}
F_{\epsilon}(\boldsymbol{\theta}_e)
=
\mathbb{E}_{Z}
\left[
\boldsymbol{y}_e^*(Z)
\right],
\end{equation}
where $\boldsymbol{y}_e^*(Z)$ is defined in Equation \eqref{eq:perturbed_optimal_decision}.
Therefore, the perturbed loss in Equation \eqref{eq:perturbed_loss} can be written as 
\begin{equation}
\mathcal{L}_{\epsilon}
(\boldsymbol{\theta}_e,\bar{\boldsymbol{y}}_e)
=
F_{\epsilon}(\boldsymbol{\theta}_e)
-
G_e
\left(
\bar{\boldsymbol{y}}_e;
\boldsymbol{\theta}_e
\right).
\end{equation}

According to Equation \eqref{eq:gradient_G_e} and \eqref{eq:gradient_F}, we have:
\begin{equation}
\nabla_{\boldsymbol{\theta}_e}
\mathcal{L}_{\epsilon}
=
\mathbb{E}_{Z}
\left[
\boldsymbol{y}_e^*(Z)
\right]
-
\bar{\boldsymbol{y}}_e,
\end{equation}
which proves Proposition~\ref{prop:perturbed_gradient}.

\section{Mathematical formulation for the Anticipative Dyn-EADARP} \label{anticipative Dyn-EADARP}
Following \cite{bongiovanni2019electric}, we use binary variables $x_{ij}^k$ which are equal to $1$ if arc $(i,j)$ is used by vehicle $k$ and $0$ otherwise. Furthermore, we use continuous variables $T_i^k$ to store the beginning of service time at a node $i$ by vehicle $k$, $L_i^k$ for the load when leaving node $i$ with vehicle $k$, $B_i^k$ to keep track of the battery level at each node $i$ and vehicle $k$, $E_s^k$ for the time available for recharging at charging station $s$ and vehicle $k$, and $R_i$ to store the excess ride time of user request $i$. $\tau_i$ denotes the release time of request $i \in P$.

\begin{align} \label{EQsec3:objective1}
     & \min w_1 \sum\limits_{k \in K} \sum\limits_{(i,j) \in A}t_{i,j}x_{i,j}^k + w_2 \sum\limits_{i \in P} R_i
\end{align}
subject to:
\begin{align} 
\label{EQsec3:start}
 & \sum\limits_{j \in P\cup S \cup F} x_{o^k,j}^{k} = 1, 
 && \forall k \in K
 \\
\label{EQsec3:end}
& \sum\limits_{i \in D\cup S \cup \{o^k\}}\sum\limits_{j \in F} x_{i,j}^{k} = 1, 
&& \forall k \in K
\\ 
\label{EQsec3:inout}
& \sum\limits_{j \in V, j \neq i} x_{i,j}^{k} - \sum\limits_{j \in V, j \neq i} x_{j,i}^{k} = 0, 
&&\forall k \in K, i \in N \cup S
\\
\label{EQsec3:visitPickup}
& \sum\limits_{k \in K} \sum\limits_{j \in N, j\neq i} x_{i,j}^{k} = 1, 
&& \forall i \in P
\\
\label{EQsec3:delivery}
& \sum\limits_{j \in N, j \neq i} x_{i,j}^{k} - \sum\limits_{j \in N, j \neq n+i} x_{j,n+i}^{k} = 0, 
&& \forall k \in K, i \in P
\\ 
\label{EQsec3:precedence}
& T_i^k + s_i + t_{i,n+i} \leqslant T_{n+i}^k, 
&&\forall k \in K, i \in P
\\ 
\label{EQsec3:tw}
& e_i \leqslant T_i^k \leqslant l_i, 
&& \forall k \in K, i \in V
\\ 
\label{EQsec3:release time}
&  T_i^k \geqslant \tau_i, 
&& \forall k \in K, i \in P
\\ 
\label{EQsec3:time}
& T_i^k + t_{i,j} + s_i - M_{i,j}(1-x_{i,j}^k) \leqslant T_j^k, 
&& \forall k \in K, (i,j) \in A 
\\ 
\label{EQsec3:ride}
& T_{n+i}^k - T_{i}^k - s_i \leqslant m_i, 
&& \forall k \in K, i \in P
\\ 
\label{EQsec3:excess}
& R_i \geqslant T_{n+i}^k - T_{i}^k - s_i - t_{i,n+i}, 
&& \forall k \in K, i \in P
\\ 
\label{EQsec3:load1}
& L_i^k + q_j  - G_{i,j}^k(1-x_{i,j}^k) \leqslant L_j^k, 
&& \forall k \in K, (i,j) \in A 
\\ 
\label{EQsec3:load2}
& L_i^k + q_j + G_{i,j}^k(1-x_{i,j}^k) \geqslant L_j^k, 
&& \forall k \in K, (i,j) \in A 
\\ 
\label{EQsec3:loadLB}
& L_i^k \geqslant \max\{0,q_i\}, 
&& \forall k \in K, i \in N
\\ 
\label{EQsec3:loadUB}
& L_i^k \leqslant \min\{C,C+q_i\}, 
&& \forall k \in K, i \in N
\\ 
\label{EQsec3:loadZero}
& L_i^k = 0, 
&& \forall k \in K, i \in \{o^k\} \cup F \cup S
\\ 
\label{EQsec3:SoCZero}
& B_{o^k}^k = B_0^k, 
&& \forall k \in K
\\ 
\label{EQsec3:SoC1}
& B_j^k \leqslant B_i^k - b_{i,j} + Q(1-x_{i,j}^k), 
&& \forall k \in K, i \in V \setminus S, j \in V  \setminus \{o^k\}, i \neq j
\\ 
\label{EQsec3:SoC2}
& B_j^k \geqslant B_i^k - b_{i,j} - Q(1-x_{i,j}^k), 
&& \forall k \in K, i \in V \setminus S, j \in V  \setminus \{o^k\}, i \neq j
\\
\label{EQsec3:SoC3}
& B_j^k \leqslant B_s^k + \alpha E_s^k - b_{s,j} + Q(1-x_{s,j}^k), 
&& \forall k \in K, s \in S, j \in P \cup F \cup S, s \neq j
\\ 
\label{EQsec3:SoC4}
& B_j^k \geqslant B_s^k + \alpha E_s^k - b_{s,j} - Q(1-x_{s,j}^k), 
&& \forall k \in K, s \in S, j \in P \cup F \cup S, s \neq j
\\ 
\label{EQsec3:SoC5}
& Q \geqslant B_s^k + \alpha E_s^k, 
&& \forall k \in K, s \in S
\\ 
\label{EQsec3:SoC6}
& B_i^k \geqslant \gamma Q, 
&& \forall k \in K, i \in F
\\
\label{EQsec3:timeCharge1}
 &   E_s^k \leqslant T_i^k - t_{s,i} - T_s^k + \tilde M_{s,i}^k\left(1-x_{s,i}^{k}\right), 
 && \forall s \in S, i \in P\cup{S}\cup{F}, k\in K, i \ne s
\\ 
\label{EQsec3:timeCharge2}
&    E_s^k \geqslant T_i^k - t_{s,i} - T_s^k - \tilde M_{s,i}^k\left(1-x_{s,i}^{k}\right), 
&& \forall s \in S, i \in P\cup{S}\cup{F}, k\in K, i \ne s
\\ 
\label{sec3: cons26}
&    x_{i,j}^k \in \{0,1\}, 
&& \forall k \in K, (i,j) \in A 
\\ 
\label{sec3: cons27}
&    B_i^k \geqslant 0, 
&& \forall k \in K, i \in V
\\ 
\label{sec3: cons28}
&    E_s^k \geqslant 0, 
&& \forall k \in K, s \in S
\end{align}

Objective function~\eqref{EQsec3:objective1} minimizes a weighted sum of total travel time and total excess user ride time. Constraints~\eqref{EQsec3:start} and ~\eqref{EQsec3:end} make sure that each vehicle leaves from its depot and returns to one of the end depots. Flow conservation is taken care of by~\eqref{EQsec3:inout}.   Constraints~\eqref{EQsec3:visitPickup}--\eqref{EQsec3:precedence} guarantee that each pickup node is visited exactly once, that the same vehicle that visits pickup node $i$ also visits drop-off node $n+i$, and in the correct order. 
Constraints~\eqref{EQsec3:tw} -- \eqref{EQsec3:time} take care of time windows and of correctly setting the beginning of service time variables. Specifically, Constraints~\eqref{EQsec3:release time} guarantee that the service start time is not earlier than the release time of a request.
Maximum user ride time restrictions and the correct computation of the excess user ride times are handled by \eqref{EQsec3:ride} and \eqref{EQsec3:excess}, respectively. Constraints \eqref{EQsec3:load1} -- \eqref{EQsec3:loadZero} make sure that the vehicle capacity (seats available) is respected at all times. The remaining constraints enforce the battery-related requirements. Constraints~\eqref{EQsec3:SoCZero} initialize the state of charge (SoC) of each vehicle at the origin depot. Constraints~\eqref{EQsec3:SoC1} and~\eqref{EQsec3:SoC2} then update the SoC at pickup and drop-off nodes, while constraints~\eqref{EQsec3:SoC3} and~\eqref{EQsec3:SoC4} account for battery recharging at charging stations. The latter work together with constraints~\eqref{EQsec3:timeCharge1} and~\eqref{EQsec3:timeCharge2}, which determine the time available for recharging during each charging-station visit. Finally, constraints \eqref{EQsec3:SoC5} and \eqref{EQsec3:SoC6} make sure that the battery capacity and the minimum battery level at the depot are respected. Note that another constraint is added in \cite{bongiovanni2019electric} to make sure each charging station is visited at most once:
\begin{align}
    \label{EQsec3:chargevisit}
& \sum\limits_{k \in K} \sum\limits_{i \in D\cup S \cup \{o^k\}} x_{i,j}^{k} \leqslant 1, 
&&\forall j \in F \cup S
\end{align}
We therefore allow multiple visits to a recharging station by replicating the set of recharging stations, as in \cite{bongiovanni2019electric} and \cite{su2024branch}

Tight big-M values $M_{i,j} = l_i + t_{ij} + s_i$ for each arc $(i,j) \in A$ are defined to make sure that Constraints \eqref{EQsec3:time} are only binding when $x_{ij}^k = 1$.
Similarly, we take $G_{i,j}^k = \max\{C^k,C^k+q_j\}$ for Constraints ~\eqref{EQsec3:load1} and ~\eqref{EQsec3:load2} and $\tilde M_{s,i}^k = T$ for Constraints ~\eqref{EQsec3:timeCharge1} and ~\eqref{EQsec3:timeCharge2}.
A feasible solution of the anticipative Dyn-EADARP is a set of $|K|$ routes with all constraints listed above being satisfied.




\section{Dynamic instance generation}
\label{app:dynamic_instances}

\begin{itemize}
    \item \textbf{Sample locations and time windows:} We first independently sample two locations from all pickup and drop-off locations in the original static instance and use them as the pickup and drop-off locations of the new request. We then randomly sample a pair of pickup and drop-off time windows from the original static instance. Requests with ``too close'' or ``too far'' locations are discarded. Specifically, a pair is considered ``too close'' if the distance between the sampled locations is smaller than $\min_{i\in P} d_{i,n+i}$ in the original instance, and ``too far'' if the corresponding travel time exceeds the maximum user ride time.
    
    \item \textbf{Adjust sampled time windows of a request:} Let request $r$ denote the original request associated with the sampled time-window pair, and let $i$ denote the newly generated request. We first adjust the drop-off time window according to the change in travel time from pickup to drop-off. Specifically, we define $\Delta_i^{\mathrm{D}}$ as the difference between $t_{i,n+i}$ and $t_{r,n+r}$:
    \[
        \Delta_i^{\mathrm{D}}
        =
        t_{i,n+i} - t_{r,n+r},
    \]
    and set the drop-off time window of the new request as
    \[
        [e_{n+i}, l_{n+i}]
        =
        [e_{n+r} + \Delta_i^{\mathrm{D}},
         \; l_{n+r} + \Delta_i^{\mathrm{D}}].
    \]
    We then adjust the pickup time window according to the change in travel
    time from the origin depot to the pickup location. Let $o$ denote the
    origin depot. We define
    \[
        \Delta_i^{\mathrm{P}}
        =
        t_{o,i} - t_{o,r},
    \]
    and set the pickup time window as
    \[
        [e_i, l_i]
        =
        [e_r + \Delta_i^{\mathrm{P}},
         \; l_r + \Delta_i^{\mathrm{P}}].
    \]
    \item \textbf{Check request feasibility:} The generated requests are checked for release time feasibility and requests that are incompatible with release time constraints are discarded. For a request $i$ generated at epoch $e$, it is kept if and only if
    \begin{align}
    \tau_e + \min_{o\in O} t_{o,i} < l_i,
    \label{eq:release_time}
    \end{align}
    where $\tau_e + \min_{o\in O} t_{o,i}$ represents the earliest possible service start time at pickup node $i$ if a vehicle departs from the closest origin depot $o$ at the release time $\tau_e$.
    For requests generated in the last epoch, we additionally require that the request can be completed within the planning horizon. Specifically, we calculate the earliest completion time $\rho_{i,e}$, defined as 
    \begin{align}
    \rho_{i,e} = \tau_{e} + \min_{o\in O} t_{o,i} + s_i + t_{i,n+i} + s_{n+i}
    + \min_{f\in F} t_{n+i,f},
    \label{eq:completion_time}
    \end{align}
    and make sure
    \begin{align}
        \rho_{i,e}\leq T.
        \label{eq:planning_horizon}
    \end{align}
    Constraint \eqref{eq:planning_horizon} ensures that, even if the vehicle starts from the depot at the request release time $\tau_e$, it can serve the pickup and drop-off nodes and reach a destination depot before the end of the planning horizon.
    \item \textbf{Time-window tightening:} After generating the requests, their pickup and drop-off time windows are
further tightened according to the release-time, maximum ride-time,
precedence, and planning-horizon constraints:
\begin{itemize}
    \item For the pickup node $i$ of a request released at epoch $e$,
    \[
    \begin{aligned}
        e_i
        &\leftarrow
        \max\left\{
            e_i,\,
            e_{n+i}-m_i-s_i,\,
            \tau_e
        \right\},\\
        l_i
        &\leftarrow
        \min\left\{
            l_i,\,
            l_{n+i}-t_{i,n+i}-s_i
        \right\}.
    \end{aligned}
    \]

    \item For the corresponding drop-off node $n+i$,
    \[
    \begin{aligned}
        e_{n+i}
        &\leftarrow
        \max\left\{
            e_{n+i},\,
            e_i+t_{i,n+i}+s_i
        \right\},\\
        l_{n+i}
        &\leftarrow
        \min\left\{
            l_{n+i},\,
            l_i+m_i+s_i,\,
            T-s_{n+i}-\
            min_{f\in F}t_{n+i,f}
        \right\}.
    \end{aligned}
    \]

\end{itemize}
\end{itemize}

\newpage

\section{Detailed results}
\label{app:detailed_results}
Table~\ref{tab:expanded-objective-means-warmstart} reports the results by instance family. 
For each family, we report the mean objective value, the relative gap with respect to Ref, and the mean number of dispatched vehicles for each policy. Specifically, the mean objective and dispatched vehicle numbers for Ref are also reported.

\begingroup
\scriptsize
\setlength{\tabcolsep}{1.25pt}
\begin{longtable}{@{}l*{18}{c}@{}}
\caption{Performance on each instance family.}\label{tab:expanded-objective-means-warmstart}\\
\toprule
Instance & Avg. requests & \multicolumn{2}{c}{Anticipative Ref} & \multicolumn{3}{c}{Greedy} & \multicolumn{3}{c}{Lazy} & \multicolumn{3}{c}{Random} & \multicolumn{3}{c}{Rolling horizon} & \multicolumn{3}{c}{Linear ML--CO} \\
\cmidrule(lr){3-4}\cmidrule(lr){5-7}\cmidrule(lr){8-10}\cmidrule(lr){11-13}\cmidrule(lr){14-16}\cmidrule(lr){17-19}
 & & Obj. & Veh. & Obj. & Gap (\%) & Veh. & Obj. & Gap (\%) & Veh. & Obj. & Gap (\%) & Veh. & Obj. & Gap (\%) & Veh. & Obj. & Gap (\%) & Veh. \\
\midrule
\endfirsthead
\toprule
Instance & Avg. requests & \multicolumn{2}{c}{Anticipative target} & \multicolumn{3}{c}{Greedy} & \multicolumn{3}{c}{Lazy} & \multicolumn{3}{c}{Random} & \multicolumn{3}{c}{Rolling horizon} & \multicolumn{3}{c}{Linear ML--CO} \\
\cmidrule(lr){3-4}\cmidrule(lr){5-7}\cmidrule(lr){8-10}\cmidrule(lr){11-13}\cmidrule(lr){14-16}\cmidrule(lr){17-19}
 & & Obj. & Veh. & Obj. & Gap (\%) & Veh. & Obj. & Gap (\%) & Veh. & Obj. & Gap (\%) & Veh. & Obj. & Gap (\%) & Veh. & Obj. & Gap (\%) & Veh. \\
\midrule
\endhead
\midrule
\multicolumn{19}{r}{Continued on next page}\\
\endfoot
\bottomrule
\endlastfoot

a3-36-J18 & 95.9 & 1,293.0 & 11.2 & 1,604.5 & +24.1 & 10.3 & 1,555.3 & +20.3 & 9.3 & 1,579.6 & +22.2 & 11.2 & 1,606.5 & +24.3 & 10.5 & \textbf{1,400.5} & +8.3 & 12.7 \\
a3-36-J27 & 138.4 & 1,871.1 & 16.4 & 2,214.7 & +18.4 & 14.4 & 2,097.4 & +12.1 & 11.8 & 2,204.6 & +17.8 & 14.9 & 2,198.4 & +17.5 & 14.9 & \textbf{1,915.6} & +2.4 & 16.9 \\
a4-32-J16 & 60.7 & 817.1 & 8.4 & 1,020.8 & +24.9 & 8.5 & 959.4 & +17.4 & 7.6 & 1,015.3 & +24.3 & 9.3 & 1,008.4 & +23.4 & 8.5 & \textbf{906.5} & +10.9 & 10.0 \\
a4-32-J24 & 89.5 & 1,166.2 & 11.4 & 1,426.2 & +22.3 & 11.6 & 1,334.8 & +14.5 & 9.6 & 1,421.7 & +21.9 & 12.0 & 1,398.4 & +19.9 & 10.8 & \textbf{1,264.8} & +8.5 & 12.7 \\
a4-40-J20 & 91.8 & 1,234.4 & 11.2 & 1,514.5 & +22.7 & 10.5 & 1,431.3 & +16.0 & 9.5 & 1,495.6 & +21.2 & 11.4 & 1,493.2 & +21.0 & 10.4 & \textbf{1,305.7} & +5.8 & 11.9 \\
a4-40-J30 & 135.2 & 1,766.7 & 14.4 & 2,160.7 & +22.3 & 14.6 & 2,059.5 & +16.6 & 13.0 & 2,137.9 & +21.0 & 15.7 & 2,163.8 & +22.5 & 14.8 & \textbf{1,891.2} & +7.1 & 15.2 \\
a4-48-J24 & 135.4 & 1,673.6 & 14.4 & 2,048.6 & +22.4 & 14.6 & 1,934.3 & +15.6 & 12.7 & 1,992.1 & +19.0 & 14.6 & 2,012.2 & +20.2 & 14.6 & \textbf{1,787.5} & +6.8 & 15.2 \\
a4-48-J36 & 203.9 & 2,409.1 & 18.7 & 2,874.4 & +19.3 & 19.7 & 2,699.6 & +12.1 & 18.9 & 2,888.5 & +19.9 & 20.9 & 2,819.9 & +17.1 & 18.8 & \textbf{2,466.2} & +2.4 & 18.9 \\
a5-40-J20 & 73.0 & 919.2 & 9.3 & 1,173.5 & +27.7 & 9.3 & 1,107.0 & +20.4 & 8.6 & 1,148.6 & +25.0 & 10.0 & 1,144.0 & +24.5 & 10.0 & \textbf{1,023.9} & +11.4 & 11.0 \\
a5-40-J30 & 111.2 & 1,381.3 & 12.3 & 1,707.1 & +23.6 & 12.9 & 1,649.8 & +19.4 & 10.7 & 1,695.5 & +22.8 & 13.2 & 1,704.9 & +23.4 & 12.4 & \textbf{1,521.6} & +10.2 & 13.5 \\
a5-50-J25 & 142.8 & 1,709.0 & 15.2 & 2,138.2 & +25.1 & 15.6 & 2,013.7 & +17.8 & 13.7 & 2,133.1 & +24.8 & 15.7 & 2,090.1 & +22.3 & 15.2 & \textbf{1,841.0} & +7.7 & 16.1 \\
a5-50-J38 & 205.3 & 2,449.5 & 20.9 & 2,958.2 & +20.8 & 20.7 & 2,808.7 & +14.7 & 17.8 & 2,926.3 & +19.5 & 20.6 & 2,897.4 & +18.3 & 19.5 & \textbf{2,529.8} & +3.3 & 21.2 \\
a5-60-J30 & 188.7 & 2,353.0 & 18.7 & 2,905.0 & +23.5 & 20.2 & 2,673.5 & +13.6 & 18.3 & 2,805.6 & +19.2 & 19.9 & 2,807.3 & +19.3 & 20.0 & \textbf{2,429.6} & +3.3 & 18.8 \\
a5-60-J45 & 280.5 & 3,419.9 & 23.8 & 4,098.9 & +19.9 & 27.0 & 3,762.9 & +10.0 & 25.9 & 4,039.2 & +18.1 & 27.2 & 4,054.4 & +18.6 & 26.1 & \textbf{3,470.6} & +1.5 & 23.8 \\
a6-48-J24 & 104.9 & 1,259.3 & 12.7 & 1,535.4 & +21.9 & 12.0 & 1,477.8 & +17.4 & 10.9 & 1,545.1 & +22.7 & 12.5 & 1,526.7 & +21.2 & 12.0 & \textbf{1,354.1} & +7.5 & 14.9 \\
a6-48-J36 & 157.8 & 1,877.9 & 18.5 & 2,250.3 & +19.8 & 16.1 & 2,115.8 & +12.7 & 15.1 & 2,239.9 & +19.3 & 18.2 & 2,201.1 & +17.2 & 15.8 & \textbf{1,964.3} & +4.6 & 19.3 \\
a6-60-J30 & 136.2 & 1,732.2 & 14.5 & 2,168.1 & +25.2 & 16.0 & 2,015.5 & +16.4 & 14.0 & 2,082.3 & +20.2 & 15.8 & 2,134.8 & +23.3 & 16.5 & \textbf{1,835.9} & +6.0 & 14.5 \\
a6-60-J45 & 199.1 & 2,429.3 & 18.3 & 2,942.5 & +21.1 & 20.6 & 2,726.7 & +12.2 & 18.6 & 2,888.7 & +18.9 & 20.0 & 2,880.6 & +18.6 & 20.1 & \textbf{2,531.9} & +4.2 & 18.3 \\
a6-72-J36 & 227.6 & 2,743.2 & 19.4 & 3,299.2 & +20.3 & 19.5 & 3,049.9 & +11.2 & 17.9 & 3,259.3 & +18.8 & 20.1 & 3,287.1 & +19.8 & 19.1 & \textbf{2,774.6} & +1.1 & 19.4 \\
a6-72-J54 & 342.1 & 4,010.0 & 24.6 & 4,736.8 & +18.1 & 27.6 & 4,337.9 & +8.2 & 24.5 & 4,643.9 & +15.8 & 28.3 & 4,659.0 & +16.2 & 27.0 & \textbf{4,063.8} & +1.3 & 24.6 \\
a7-56-J28 & 112.5 & 1,471.2 & 13.5 & 1,825.1 & +24.1 & 13.4 & 1,761.6 & +19.7 & 12.4 & 1,815.8 & +23.4 & 14.4 & 1,810.3 & +23.1 & 13.8 & \textbf{1,588.9} & +8.0 & 14.4 \\
a7-56-J42 & 165.9 & 2,141.4 & 17.9 & 2,555.7 & +19.4 & 17.8 & 2,470.7 & +15.4 & 16.2 & 2,528.9 & +18.1 & 18.5 & 2,545.1 & +18.9 & 17.7 & \textbf{2,241.4} & +4.7 & 18.4 \\
a7-70-J35 & 171.4 & 2,166.3 & 22.0 & 2,643.8 & +22.0 & 22.3 & 2,505.0 & +15.6 & 24.8 & 2,607.6 & +20.4 & 22.8 & 2,602.6 & +20.1 & 23.0 & \textbf{2,312.7} & +6.8 & 24.2 \\
a7-70-J53 & 258.1 & 3,220.9 & 30.1 & 3,818.7 & +18.6 & 28.8 & 3,555.3 & +10.4 & 32.6 & 3,786.8 & +17.6 & 31.3 & 3,799.8 & +18.0 & 29.8 & \textbf{3,299.5} & +2.4 & 30.9 \\
a7-84-J42 & 244.8 & 3,007.3 & 20.1 & 3,550.4 & +18.1 & 20.2 & 3,319.5 & +10.4 & 18.3 & 3,505.0 & +16.6 & 20.9 & 3,545.3 & +17.9 & 20.6 & \textbf{3,059.7} & +1.7 & 20.3 \\
a7-84-J63 & 367.9 & 4,427.9 & 28.6 & 5,188.1 & +17.2 & 27.5 & 4,842.8 & +9.4 & 26.5 & 5,068.3 & +14.5 & 29.2 & 5,151.0 & +16.3 & 27.5 & \textbf{4,448.4} & +0.5 & 28.7 \\
a8-64-J32 & 126.6 & 1,528.4 & 13.5 & 1,891.5 & +23.8 & 12.9 & 1,775.7 & +16.2 & 11.1 & 1,851.6 & +21.2 & 13.6 & 1,851.8 & +21.2 & 13.1 & \textbf{1,599.5} & +4.7 & 14.0 \\
a8-64-J48 & 189.0 & 2,255.1 & 18.9 & 2,634.3 & +16.8 & 17.6 & 2,544.6 & +12.8 & 16.0 & 2,602.1 & +15.4 & 17.7 & 2,650.2 & +17.5 & 16.8 & \textbf{2,304.8} & +2.2 & 19.0 \\
a8-80-J40 & 205.9 & 2,392.1 & 18.8 & 2,888.9 & +20.8 & 18.8 & 2,699.4 & +12.8 & 17.7 & 2,864.5 & +19.8 & 19.5 & 2,862.1 & +19.6 & 19.0 & \textbf{2,447.9} & +2.3 & 18.8 \\
a8-80-J60 & 310.3 & 3,530.1 & 25.6 & 4,341.1 & +23.0 & 27.9 & 4,014.7 & +13.7 & 27.2 & 4,184.8 & +18.6 & 27.9 & 4,222.5 & +19.6 & 27.2 & \textbf{3,636.4} & +3.0 & 25.8 \\
a8-96-J48 & 301.5 & 3,629.4 & 23.8 & 4,401.9 & +21.3 & 23.1 & 4,177.6 & +15.1 & 22.6 & 4,362.6 & +20.2 & 24.4 & 4,329.2 & +19.3 & 23.2 & \textbf{3,641.3} & +0.3 & 23.8 \\
a8-96-J72 & 452.8 & 5,236.0 & 30.2 & 6,315.0 & +20.6 & 32.3 & 5,897.6 & +12.6 & 31.5 & 6,261.7 & +19.6 & 32.8 & 6,268.8 & +19.7 & 32.1 & \textbf{5,255.0} & +0.4 & 30.2 \\
\midrule
\textbf{Average} & 188.3 & 2,297.5 & 18.0 & 2,776.0 & +21.5 & 18.3 & 2,605.5 & +14.6 & 17.0 & 2,737.0 & +19.9 & 18.9 & 2,741.5 & +20.0 & 18.1 & \textbf{2,378.6} & +4.8 & 18.7 \\
\end{longtable}
\endgroup

 \newpage
\section{Implementation details for the effect of varying epoch lengths}
\label{app:epoch_instance_generation}
The experiments are organized as follows:

\begin{itemize}
\item \textbf{Generate dynamic instances for different values of $\eta$:}
For each $\eta\in\{10,20,30\}$, we construct a training set $M_{\eta}$ and a testing set $N_{\eta}$ from the original 60-minute sets $M_{60}$ and $N_{60}$, respectively. Here, $N_{60}$ contains 32 large testing instances, obtained by selecting the first replication from each of the 32 instance families defined by $(I_{static},J)$.

To isolate the effect of more frequent decision-making, we keep all request release times unchanged and only refine the decision epochs according to $\eta$. Specifically, each original 60-minute epoch is divided into $60/\eta$ shorter epochs. For example, when $\eta=30$, the original epoch $[0,60)$ is divided into $[0,30)$ and $[30,60)$, while all requests originally released at time 0 remain released at time 0. Hence, no additional requests are released at the newly inserted decision epochs, but the service provider can re-optimize the current plan more frequently. When $\eta=60$, the construction reduces to the original sets $M_{60}$ and $N_{60}$.

\item \textbf{Generate training samples and train new models:}
For each $\eta\in\{10,20,30\}$, we generate training samples from $M_{\eta}$ following the procedure described in Section~\ref{subsec:anticipative_target} and train a corresponding ML--CO model. With shorter epochs, a request selected at one decision epoch may only be picked up in a subsequent epoch. In this case, its training label is associated with the epoch in which the dispatching decision is made rather than the epoch in which the pickup is physically performed. 

\item \textbf{Evaluate the trained models:}
We evaluate the models trained under different epoch lengths. Each model trained on $M_{\eta}$, $\eta\in\{10,20,30,60\}$, is evaluated on testing sets with the same epoch lengths. 
\end{itemize}

\end{document}